\renewcommand{\a}{\alpha}
\renewcommand{\d}{\delta}
\newcommand{\D}{\Delta}
\newtheorem{theorem}{Theorem}[section]
\newtheorem{lemma}[theorem]{Lemma}
\newtheorem{corollary}[theorem]{Corollary}
\journal{-}
\begin{document}

\begin{frontmatter}



\title{New inequalities involving the Geometric-Arithmetic index\tnoteref{Proyectos}}
 \tnotetext[Proyectos]{Supported in part by two grants from Ministerio de Econom{\'\i}a y Competititvidad (MTM2013-46374-P and MTM2015-69323-REDT), Spain,
and a grant from CONACYT (FOMIX-CONACyT-UAGro 249818), M\'exico.}


\author[Jose]{Jos\'e M. Rodr{\'\i}guez}
\address[Jose]{Departamento de Matem\'aticas, Universidad Carlos III de Madrid,
Avenida de la Universidad 30, 28911 Legan\'es, Madrid, Spain}
\ead{jomaro@math.uc3m.es}
\author[JuanAlberto]{Juan A. Rodr{\'\i}guez-Vel\'azquez\corref{cor1}}
\cortext[cor1]{Corresponding author}
\address[JuanAlberto]{
Departament d'Enginyeria
Inform\`{a}tica i Matem\`{a}tiques, Universitat Rovira i Virgili, 
Av. Pa\"{i}sos Catalans 26, 43007
Tarragona, Spain}
\ead{juanalberto.rodriguez@urv.cat}
\author[Sigarreta]{Jos\'e M. Sigarreta}
\address[Sigarreta]{Facultad de Matem\'aticas, Universidad Aut\'onoma de Guerrero,
Carlos E. Adame No.54 Col. Garita, 39650 Acalpulco Gro., Mexico}\ead{jsmathguerrero@gmail.com}

\begin{abstract}
Let $G=(V,E)$ be a simple connected graph and $d_i$ be the degree of its $i$th vertex. In a recent paper
[J. Math. Chem. 46 (2009) 1369-1376]
the \emph{first geometric-arithmetic index} of a graph $G$ was defined as
$$GA_1=\sum_{ij\in E}\frac{2 \sqrt{d_i d_j}}{d_i + d_j}.$$
This graph invariant is useful for chemical proposes. The main use of $GA_1$ is for designing so-called quantitative structure-activity relations and quantitative structure-property relations.
In this paper we obtain new inequalities involving the geometric-arithmetic index $GA_1$
and characterize the graphs which make the inequalities tight.
In particular, we improve some known results,  generalize other,  and we relate $GA_1$ to other well-known topological indices.
\end{abstract}

\begin{keyword}
Graph invariant \sep Vertex-degree-based graph invariant \sep Topological index \sep Geometric-arithmetic index.



\MSC[2010] 05C07 \sep 92E10
\end{keyword}

\end{frontmatter}


\section{Introduction}
\label{Introduction}
A graph invariant is a property of graphs that is preserved by isomorphisms. Around the middle of the last century theoretical chemists discovered that some interesting  relationships between various properties of organic substances and the molecular structure  can be deduced by examining some invariants of the underlining molecular graph. Those graph invariants that are useful for chemical purposes were named \emph{topological indices} or \emph{molecular structure descriptors}.  The Wiener index, introduced by Harry Wiener in 1947, is the oldest topological index related to molecular branching.  Wiener defined this topological  index   as the sum of all shortest-path distances of a graph, and he showed that it is closely correlated with the boiling points of alkane molecules \cite{Wi}. Based on its success, many other topological indices have been developed subsequently to Wiener's work.

Topological indices based on vertex degrees  have been
used over 40 years. Among them, several indices are recognized to be useful tools in
chemical researches. Probably, the best known such descriptor is the Randi\'c connectivity
index  \cite{R}. There are more than thousand papers and a couple of books dealing with
this molecular descriptor (see, e.g., \cite{GF,LG,LS,RS,RS0} and the references therein).
During many years, scientists were trying to improve the predictive power of the
Randi\'c index. This led to the introduction of a large number of new topological
descriptors resembling the original Randi\'c index.
The first geometric-arithmetic index $GA_1$, defined in \cite{VF} as
$$
GA_1(G) = \sum_{uv\in E(G)}\frac{2\sqrt{d_u d_v}}{d_u + d_v}
$$
where $uv$ denotes the edge of the graph $G$ connecting the vertices $u$ and $v$, and
$d_u$ is the degree of the vertex $u$,
is one of the successors of the Randi\'c index.
Although $GA_1$ was introduced in $2009$, there are many papers dealing with this index
(see, e.g., \cite{D,DGF,DGF2,MH,DasTrinajstic2010,DZT,RS2,RS3,S,VF} and the references therein).
There are other geometric-arithmetic indices, like $Z_{p,q}$ \cite{DGF}, where $Z_{0,1} = GA_1$, but the results in \cite[p.598]{DGF}
show that $GA_1$   gathers the
same information on observed molecule as   $Z_{p,q}$.

As described in \cite{DGF}, the reason for introducing a new index is to gain prediction of target property (properties)
of molecules somewhat better than obtained by already presented indices. Therefore,
a test study of predictive power of a new index must be done. As a standard for
testing new topological descriptors, the properties of octanes are commonly used.
We can find 16 physico-chemical properties of octanes at www.moleculardescriptors.eu. The $GA_1$ index gives better correlation coefficients than the Randi\'c index for these properties, but the differences between
them are not significant. However, the predicting ability of the $GA_1$ index compared with
Randi\'c index is reasonably better (see \cite[Table 1]{DGF}).
Although only about 1000 benzenoid hydrocarbons are known, the number of
possible benzenoid hydrocarbons is huge. For instance, the number of
possible benzenoid hydrocarbons with 35 benzene rings is $5.85\cdot 10^{21}$ \cite{NGJ}.
Therefore, the modeling of their physico-chemical properties is very important in order
to predict properties of currently unknown species.
The graphic in \cite[Fig.7]{DGF} (from \cite[Table 2]{DGF}, \cite{TRC}) shows
that there exists a good linear correlation between $GA_1$ and the heat of formation of benzenoid hydrocarbons
(the correlation coefficient is equal to $0.972$).
Furthermore, the improvement in
prediction with $GA_1$ index comparing to Randi\'c index in the case of standard
enthalpy of vaporization is more than 9$\%$. That is why one can think that $GA_1$ index
should be considered  for designing so-called quantitative structure-activity relations and quantitative structure-property relations, where ``structure" means  molecular structure, ``property" some physical or chemical property and ``activity" some biologic, pharmacologic or similar property.

Some inequalities involving 
the geometric-arithmetic index   and other topological indices were obtained in \cite{D, DGF,DGF2,DasTrinajstic2010,MH,RS2,RS3,S,VF}. The aim of this paper is to obtain new inequalities involving the geometric-arithmetic index $GA_1$
and characterize the graphs which make the inequalities tight.
In particular, we improve some known results,  generalize other,  and we relate $GA_1$ to other well-known topological indices.

\section{New equalities involving $GA_1$}

Throughout this paper, $G=(V,E)=(V (G),E (G))$ denotes a (non-oriented) finite simple (without multiple edges and loops) connected graph with $E \neq \emptyset$.
Note that the connectivity of $G$ is not an important restriction, since if $G$ has connected components $G_1,\dots,G_r,$ then
$$GA_1(G) = GA_1(G_1) + \cdots + GA_1(G_r).$$ Furthermore, every molecular graph is connected.

From now on, the order (the cardinality of $V(G)$), size (the cardinality of $E(G)$), and maximum and minimum degree  of $G$ will be denoted by $n,m,\D,\d,$ respectively.

\smallskip

We will denote by $M_1$ and $M_2$ the first and second Zagreb indices, respectively, defined as
$$
M_1 = M_1(G) = \sum_{u\in V(G)} d_u^2,
\qquad
M_2 = M_2(G) = \sum_{uv\in E(G)} d_u d_v .
$$
These topological indices have attracted growing interest, see e.g., \cite{BF,D2,FGE,GT,L} (in particular, they are included in a number of programs used for the routine computation of topological indices).

%
%

The following inequality was given in 
 \cite{MH} (see also \cite[p.610]{DGF}) and \cite[Theorem 3.7]{RS2},
\begin{equation} \label{eq24}
GA_1(G) \le \frac1{2\d}\, M_1(G) .
\end{equation}
Since $M_1(G) \ge \d^2 n$,
Theorem \ref{t:end2} below improves \eqref{eq24}.


\begin{theorem} \label{t:end2}
For any graph $G$,
$$
\frac{\d M_1(G)}{2 \D^2} \le GA_1(G) \le \frac{\sqrt{nM_1(G)}}{2} \, ,
$$
and each equality holds if and only if $G$ is a regular graph.
\end{theorem}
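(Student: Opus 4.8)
The plan is to derive both inequalities edge by edge, exploiting the elementary identity
$$M_1(G) = \sum_{uv \in E(G)} (d_u + d_v),$$
which holds because each vertex $u$ is counted once per incident edge, contributing $d_u$ each of the $d_u$ times, hence $d_u^2$ in total. This rewrites the Zagreb index as a sum over edges, matching the edge-sum structure of $GA_1(G)$, so that in each case it suffices to compare the two summands term by term and then sum.

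For the lower bound I would show that every edge satisfies
$$\frac{2\sqrt{d_u d_v}}{d_u + d_v} \ge \frac{\delta (d_u + d_v)}{2\Delta^2},$$
which is equivalent to $4\Delta^2 \sqrt{d_u d_v} \ge \delta (d_u + d_v)^2$. This follows from the two crude bounds $\sqrt{d_u d_v} \ge \delta$ and $(d_u + d_v)^2 \le 4\Delta^2$, since then $\delta(d_u+d_v)^2 \le 4\Delta^2 \delta \le 4\Delta^2\sqrt{d_u d_v}$. Summing over all edges and invoking the identity above gives $GA_1(G) \ge \frac{\delta}{2\Delta^2}\sum_{uv}(d_u+d_v) = \frac{\delta M_1(G)}{2\Delta^2}$.

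For the upper bound I would route through the size $m$ rather than touch $M_1$ directly at first. Since $\sqrt{d_u d_v} \le (d_u + d_v)/2$ by the arithmetic--geometric mean inequality, every summand of $GA_1(G)$ is at most $1$, so $GA_1(G) \le m$. Then Cauchy--Schwarz applied to $\sum_{u} d_u\cdot 1$ yields $(2m)^2 = \left(\sum_u d_u\right)^2 \le n \sum_u d_u^2 = n\, M_1(G)$, that is $m \le \frac{\sqrt{n M_1(G)}}{2}$. Chaining the two estimates gives $GA_1(G) \le m \le \frac{\sqrt{n M_1(G)}}{2}$.

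The real work, and the step I expect to require the most care, is the equality analysis rather than the inequalities themselves. For the lower bound, equality in the chain $\delta(d_u+d_v)^2 \le 4\Delta^2\delta \le 4\Delta^2\sqrt{d_u d_v}$ on every edge forces both $d_u + d_v = 2\Delta$ and $\sqrt{d_u d_v} = \delta$, hence $d_u = d_v = \Delta = \delta$, so $G$ is regular; conversely, regularity makes each term equal $1$ and both sides equal $m$. For the upper bound, equality requires equality in both links: $GA_1(G) = m$ forces $d_u = d_v$ on each edge, which by connectedness makes $G$ regular, while equality in Cauchy--Schwarz again forces all degrees equal. The one subtlety to confirm is that these two separately obtained conditions in the upper-bound chain are characterized by the \emph{same} class of graphs, which the direct computation $M_1 = n r^2$ and $m = n r / 2$ for an $r$-regular graph settles, showing the entire chain collapses to equalities precisely for regular graphs.
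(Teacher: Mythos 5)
Your proof is correct. The lower bound coincides with the paper's own argument: the same identity $M_1(G)=\sum_{uv\in E(G)}(d_u+d_v)$ and the same two per-edge estimates $\sqrt{d_u d_v}\ge\delta$ and $(d_u+d_v)^2\le 4\Delta^2$, so there is nothing substantive to compare there. Your upper bound, however, takes a genuinely different and more elementary route. The paper applies Cauchy--Schwarz over the edge set, splitting each summand (after bounding it by $1$) as $\sqrt{d_u+d_v}\cdot\frac{1}{\sqrt{d_u+d_v}}$, and then needs the auxiliary estimate $\sum_{uv\in E(G)}\frac{1}{d_u+d_v}\le\frac{n}{4}$, obtained from the harmonic--arithmetic mean inequality $\frac{4}{d_u+d_v}\le\frac{1}{d_u}+\frac{1}{d_v}$ together with the summation identity $\sum_{uv\in E(G)}\left(f(d_u)+f(d_v)\right)=\sum_{u\in V(G)}d_u f(d_u)$ applied to $f(x)=1/x$. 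You bypass all of this by factoring through the size: $GA_1(G)\le m$ (AM--GM on each edge) chained with the vertex-level Cauchy--Schwarz $4m^2=\left(\sum_{u}d_u\right)^2\le n M_1(G)$. Your equality analysis is sound; in particular you correctly invoke connectedness (a standing hypothesis of the paper) to pass from $d_u=d_v$ on every edge to regularity, and you verify the converse directly via $M_1=nr^2$ and $m=nr/2$. What your route buys is brevity and an illuminating byproduct: since $m\le\frac{\sqrt{nM_1(G)}}{2}$ holds for \emph{every} graph, the theorem's upper bound is never sharper than the trivial bound $GA_1(G)\le m$, a fact the paper's direct edge-wise Cauchy--Schwarz obscures. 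What the paper's route buys is a reusable template --- Cauchy--Schwarz over edges with a weighted splitting, combined with P\'olya--Szeg\"o for the matching lower bounds --- which recurs in its subsequent results such as Theorems \ref{t:mz}, \ref{t:mz2} and \ref{t:mzz}.
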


\begin{proof}
First of all, note that for every function $f:[\d,\D] \rightarrow \mathbb{R}$, we have
$$
\sum_{uv\in E(G)} \left(f(d_u) + f(d_v)\right)
= \sum_{u\in V(G)} d_u f(d_u) .
$$
Since $\frac{4}{d_u + d_v}\le \frac1{d_u} + \frac1{d_v}$, by taking $f(d_u)=\frac{1}{d_u}$ we deduce
$$
\sum_{uv\in E(G)}\frac{1}{d_u + d_v}
\le \frac14 \sum_{uv\in E(G)}  \left(\frac1{d_u} + \frac1{d_v}\right)
= \frac{n}{4} \,.
$$

Cauchy-Schwarz inequality gives
$$
\begin{aligned}
GA_1(G) & = \sum_{uv\in E(G)}\frac{2\sqrt{d_u d_v}}{d_u + d_v}\\
&\le
\sum_{uv\in E(G)}\sqrt{d_u+d_v}\frac{1}{\sqrt{d_u+d_v}}
\\
& \le
\left(\sum_{uv\in E(G)}(d_u + d_v)\right)^{1/2} \left(\sum_{uv\in E(G)}\frac{1}{d_u + d_v}\right)^{1/2}
\\
& \le \left(\sum_{u\in V(G)} d_ud_u\right)^{1/2} \left(\frac{n}{4}\right)^{1/2}\\
&= \frac{\sqrt{nM_1(G)}}{2}
\,.
\end{aligned}
$$

On the other hand, for any $uv\in E(G)$ we have
$$
\frac{2\d}{d_u + d_v}
\le \frac{2\sqrt{d_u d_v}}{d_u + d_v}
$$
and
$$
\frac1{\D^2}
\le \frac{4}{(d_u + d_v)^2}
= \frac{\frac{2}{d_u + d_v}}{\frac{d_u + d_v}{2}} \, ,
$$
Hence,
$$
\frac1{\D^2}\, \frac{d_u + d_v}{2}
\le \frac{2}{d_u + d_v} \,,
$$
and so
$$
\frac{\d}{\D^2}\, \frac{d_u + d_v}{2}
\le \frac{2\d}{d_u + d_v}
\le \frac{2\sqrt{d_u d_v}}{d_u + d_v}, \, 
$$
which implies that
$$\frac{\d M_1(G)}{2\D^2}=\frac{\d}{2\D^2}\sum_{u\in V(G)} d_u^2=\frac{\d}{2\D^2}\sum_{uv\in E(G)} (d_u+d_v)
\le GA_1(G).
$$
Notice that all the equalities above hold if and only if the graph is regular. 
\end{proof}

The following elementary lemma will be an important tool to derive some results.

%

\begin{lemma} \label{c:t}
Let $g$ be the function $g(x,y)=\frac{2\sqrt{xy}}{x + y}$ with $0<a\le x,y \le b$. Then
$$
\frac{2\sqrt{ab}}{a + b} \le g(x,y) \le 1.
$$
The equality in the lower bound is attained if and only if either $x=a$ and $y=b$, or $x=b$ and $y=a$,
and the equality in the upper bound is attained if and only if $x=y$.
Besides, $g(x,y)= g(x',y')$ for some $x',y'>0$ if and only if $x/y$ is equal to either $x'/y'$ or $y'/x'$.
Finally, if $0 \le x'<x \le y$, then $g(x',y)< g(x,y)$.
\end{lemma}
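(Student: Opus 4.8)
The plan is to prove each of the four claims about $g(x,y)=\frac{2\sqrt{xy}}{x+y}$ separately, exploiting the fact that $g$ depends only on the ratio $t=x/y$. Setting $t=x/y>0$, a direct substitution gives
$$
g(x,y)=\frac{2\sqrt{xy}}{x+y}=\frac{2\sqrt{t}}{t+1}=:h(t),
$$
which reduces every claim to a one-variable analysis of $h$. I would record first that $h(t)=h(1/t)$, which is the symmetry making the ratio condition natural.

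For the two-sided bound, I would study $h(t)=\frac{2\sqrt t}{t+1}$ on the relevant range. The upper bound $h(t)\le 1$ is equivalent to $(t+1)-2\sqrt t=(\sqrt t-1)^2\ge 0$, with equality exactly when $t=1$, i.e. $x=y$; this is precisely the AM--GM inequality and disposes of the upper-bound claim including its equality case. For the lower bound, when $a\le x,y\le b$ the ratio satisfies $t\in[a/b,\,b/a]$. I would show $h$ is increasing on $(0,1]$ and decreasing on $[1,\infty)$ (for instance by differentiating $h$, whose sign is governed by $1-t$, or by writing $h(t)^2=\frac{4t}{(t+1)^2}$ and analyzing that rational function). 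Hence on the symmetric interval $[a/b,b/a]$ the minimum of $h$ is attained at the endpoints $t=a/b$ and $t=b/a$, both giving the value $\frac{2\sqrt{ab}}{a+b}$; since $h$ is strictly monotone on each side of $1$, equality in the lower bound forces $t\in\{a/b,b/a\}$, which translates to $\{x,y\}=\{a,b\}$ as stated.

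For the level-set claim, $g(x,y)=g(x',y')$ means $h(t)=h(t')$ with $t=x/y$, $t'=x'/y'$. Using strict monotonicity of $h$ on $(0,1]$ and on $[1,\infty)$ together with the symmetry $h(t)=h(1/t)$, I would argue that $h(t)=h(t')$ holds if and only if $t'=t$ or $t'=1/t$; indeed $h$ restricted to $[1,\infty)$ is strictly decreasing and therefore injective, and every value is achieved there by a unique representative, so two arguments share a value exactly when they reduce to the same representative after possibly inverting. This gives the stated dichotomy $x'/y'\in\{x/y,\,y/x\}$.

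For the final monotonicity-in-the-first-slot claim, I would fix $y>0$ and view $x\mapsto g(x,y)$; with $t=x/y$ this is $x\mapsto h(x/y)$, and the hypothesis $0\le x'<x\le y$ means $0<t'<t\le 1$ (the case $x'=0$ being handled by the limit $g(0,y)=0<g(x,y)$). Since $h$ is strictly increasing on $(0,1]$, we get $h(t')<h(t)$, i.e. $g(x',y)<g(x,y)$. I do not expect a genuine obstacle here; the only point requiring care is organizing the strict-versus-weak inequalities and the endpoint $x'=0$ cleanly, since $t=1$ is exactly the boundary where $h$ switches from increasing to decreasing, and the hypothesis $x\le y$ is precisely what keeps us on the increasing branch.
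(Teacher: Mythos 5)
Your proof is correct, but note that the paper itself offers no proof of this lemma at all: it is stated as ``elementary'' and used as a tool, so there is no in-paper argument to compare against. Your reduction to the single variable $t=x/y$, with $h(t)=\frac{2\sqrt{t}}{t+1}$, the symmetry $h(t)=h(1/t)$, the AM--GM identity $(\sqrt{t}-1)^2\ge 0$ for the upper bound, and the unimodality of $h$ (increasing on $(0,1]$, decreasing on $[1,\infty)$, as the sign of $h'$ is governed by $1-t$) for the lower bound and the level-set and monotonicity claims, is the natural and standard route, and all four parts go through as you describe. One small step you gloss over deserves a line: equality in the lower bound gives $t\in\{a/b,\,b/a\}$, and to translate $t=a/b$ into $x=a$, $y=b$ (rather than merely $x/y=a/b$) you need the box constraints: from $x=(a/b)y$ and $y\le b$ one gets $x\le a$, hence $x=a$ and then $y=b$, and symmetrically for $t=b/a$. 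With that one-line addition your argument is complete; your explicit handling of the endpoint $x'=0$ and of why the hypothesis $x\le y$ keeps you on the increasing branch of $h$ is exactly the care the final claim requires.
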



\begin{theorem} \label{t:p4bis}
For any graph $G$,
$$
\frac{\sqrt{(\D+\d)^2 M_2(G)+4\D^3\d\, m(m-1)}}{\D(\D+\d)}
\le GA_1(G)
\le \frac{\sqrt{M_2(G)+\d^2 m(m-1)}}{\d} \, ,
$$and each equality holds if and only if $G$ is regular.
\end{theorem}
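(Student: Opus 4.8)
The plan is to square $GA_1(G)$ and split the resulting double sum into its diagonal and off-diagonal parts. Writing $g_{uv}=g(d_u,d_v)=\frac{2\sqrt{d_u d_v}}{d_u+d_v}$ for each edge $uv\in E(G)$ and labelling the edges $e_1,\dots,e_m$, I would use
$$
GA_1(G)^2=\left(\sum_{k=1}^m g_{e_k}\right)^2=\sum_{k=1}^m g_{e_k}^2+\sum_{k\ne l}g_{e_k}g_{e_l},
$$
where the second sum runs over the $m(m-1)$ ordered pairs of distinct edges. Both bounds then follow by estimating the two pieces separately, using $g_{uv}^2=\frac{4d_u d_v}{(d_u+d_v)^2}$ for the diagonal and Lemma \ref{c:t} for the off-diagonal products.

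For the upper bound I would estimate the diagonal with $d_u+d_v\ge 2\d$, giving $g_{uv}^2\le \frac{d_u d_v}{\d^2}$ and hence $\sum_k g_{e_k}^2\le M_2(G)/\d^2$; for the off-diagonal, the upper estimate $g_{uv}\le 1$ of Lemma \ref{c:t} yields $g_{e_k}g_{e_l}\le 1$, so $\sum_{k\ne l}g_{e_k}g_{e_l}\le m(m-1)$. Adding the two contributions and taking a square root produces $GA_1(G)\le \frac{\sqrt{M_2(G)+\d^2 m(m-1)}}{\d}$. For the lower bound the same diagonal term is bounded the other way using $d_u+d_v\le 2\D$, giving $g_{uv}^2\ge \frac{d_u d_v}{\D^2}$ and $\sum_k g_{e_k}^2\ge M_2(G)/\D^2$, while the lower estimate $g_{uv}\ge \frac{2\sqrt{\D\d}}{\D+\d}$ of Lemma \ref{c:t} gives $g_{e_k}g_{e_l}\ge \frac{4\D\d}{(\D+\d)^2}$ and $\sum_{k\ne l}g_{e_k}g_{e_l}\ge \frac{4\D\d}{(\D+\d)^2}\,m(m-1)$. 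Reducing both contributions to the common denominator $\D^2(\D+\d)^2$ and taking a square root yields precisely the stated lower bound.

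The delicate part is the equality analysis, particularly for the lower bound. Equality in the upper bound forces $d_u+d_v=2\d$ on every edge, hence $d_u=d_v=\d$, which by connectivity makes $G$ regular. For the lower bound, equality in the diagonal step forces $d_u=d_v=\D$ on every edge, whereas equality in the off-diagonal step forces each factor to attain its minimum $\frac{2\sqrt{\D\d}}{\D+\d}$, which by the equality case of Lemma \ref{c:t} means $\{d_u,d_v\}=\{\d,\D\}$ on every edge. When $m\ge 2$ these two requirements are incompatible unless $\D=\d$, so equality again forces $G$ to be regular; the case $m=1$, where there is no off-diagonal term, reduces at once to $G=K_2$. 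The main obstacle is exactly reconciling these two mutually contradictory degree conditions to extract regularity. Verifying the converse—that regularity makes every inequality in both chains an equality—is then a direct substitution of $\D=\d=r$ and $M_2(G)=mr^2$.
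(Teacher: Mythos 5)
Your proposal is correct and follows essentially the same route as the paper: squaring $GA_1(G)$, splitting into the diagonal sum $\sum_{uv}\frac{4d_ud_v}{(d_u+d_v)^2}$ (bounded via $2\d\le d_u+d_v\le 2\D$ against $M_2(G)$) and the off-diagonal sum of $m(m-1)$ cross terms (bounded via Lemma \ref{c:t} between $\frac{4\D\d}{(\D+\d)^2}$ and $1$). Your equality analysis is in fact slightly more careful than the paper's terse remark, correctly resolving the apparent tension between the diagonal condition $d_u=d_v=\D$ and the off-diagonal condition $\{d_u,d_v\}=\{\d,\D\}$ by concluding $\D=\d$.
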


\begin{proof}
By  Lemma \ref{c:t}, for any edge $uv\in E(G)$ we have
\begin{equation}\label{EQ-1-M2}
\frac{2\sqrt{d_u d_v}}{d_u + d_v} \ge \frac{2\sqrt{\D\d}}{\D+\d}\; .
\end{equation}
Notice also that
\begin{equation}\label{EQ-2-M2}
\frac{1}{\Delta^2} \le \frac{4}{(d_u+d_v)^2}\le \frac{1}{\delta^2}
\end{equation}
Inequalities \eqref{EQ-1-M2} and \eqref{EQ-2-M2} lead to
$$
\begin{aligned}
\left( GA_1(G)\right)^2
& = \left( \sum_{uv\in E(G)}\frac{2 \sqrt{d_u d_v}}{d_u + d_v} \, \right)^2
\\
& = \sum_{uv\in E(G)}\frac{4 d_u d_v}{(d_u + d_v)^2}
+ 2\sum_{\begin{array}{c}
uv,xy\in E(G),
\\
uv\neq xy
\end{array}}\frac{2 \sqrt{d_u d_v}}{d_u + d_v}\,\frac{2 \sqrt{d_x d_y}}{d_x + d_y}
\\
& \ge \frac{1}{\D^2}\sum_{uv\in E(G)} d_u d_v + \sum_{\begin{array}{c}
uv,xy\in E(G),
\\
uv\neq xy
\end{array}} \frac{8 \D\d}{(\D+\d)^2}
\\
& = \frac{M_2(G)}{\D^2} + \frac{4 \D\d}{(\D+\d)^2}\,m(m-1)
\\
& = \frac{(\D+\d)^2 M_2(G)+4\D^3\d \,m(m-1)}{\D^2(\D+\d)^2} \, .
\end{aligned}
$$
In a similar way, we obtain
$$
\begin{aligned}
\left( GA_1(G)\right)^2
& = \sum_{uv\in E(G)}\frac{4 d_u d_v}{(d_u + d_v)^2}
+ 2\sum_{\begin{array}{c}
uv,xy\in E(G),
\\
uv\neq xy
\end{array}}\frac{2 \sqrt{d_u d_v}}{d_u + d_v}\,\frac{2 \sqrt{d_x d_y}}{d_x + d_y}
\\
& \le \frac{1}{\d^2}\sum_{uv\in E(G)} d_u d_v + 2\sum_{\begin{array}{c}
uv,xy\in E(G),
\\
uv\neq xy
\end{array}} 1
\\
& = \frac{M_2(G)}{\d^2} + m(m-1)
\\
& = \frac{M_2(G)+\d^2 m(m-1)}{\d^2} \, .
\end{aligned}
$$
To conclude the proof we can observe that  equality \eqref{EQ-1-M2} holds for $uv\in E(G)$ if and only if $d_u=\Delta$ and $d_v=\delta$ or $d_u=\delta$ and $d_v=\Delta$. Furthermore, the equalities in  \eqref{EQ-2-M2} hold for every $uv\in E(G)$ if and only if $G$ is regular.
\end{proof}

We will use the following particular case of Jensen's inequality.

\begin{lemma} \label{l:Jensen}
If $f$ is a convex function in $\mathbb{R}_+$ and $x_1,\dots,x_k> 0$, then
$$
f\left(\frac{ x_1+\cdots +x_k }{k} \right) \le \frac{1}{k} \, \left(f( x_1)+\cdots +f(x_k) \right) .
$$
\end{lemma}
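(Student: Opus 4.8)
The plan is to prove the inequality by induction on $k$, using the defining two-point inequality of convexity both as the base step and as the engine of the induction. For $k=1$ the statement is a trivial equality, and for $k=2$ it is exactly the definition of convexity applied to the points $x_1,x_2$ with weight $\tfrac12$, namely $f\big(\tfrac{x_1+x_2}{2}\big)\le \tfrac12\big(f(x_1)+f(x_2)\big)$.

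For the inductive step I would assume the assertion for $k-1$ positive reals and establish it for $k$. Writing $s=\frac{x_1+\cdots+x_{k-1}}{k-1}$ for the average of the first $k-1$ values, I would use the convex-combination identity
$$
\frac{x_1+\cdots+x_k}{k}=\frac{k-1}{k}\,s+\frac1k\,x_k ,
$$
which exhibits $\frac{x_1+\cdots+x_k}{k}$ as a convex combination of $s$ and $x_k$ with weights $\frac{k-1}{k}$ and $\frac1k$. Applying convexity to this combination gives
$$
f\!\left(\frac{x_1+\cdots+x_k}{k}\right)\le \frac{k-1}{k}\,f(s)+\frac1k\,f(x_k),
$$
and substituting the induction hypothesis $f(s)\le \frac{1}{k-1}\big(f(x_1)+\cdots+f(x_{k-1})\big)$ makes the factor $\frac{k-1}{k}$ cancel the $\frac{1}{k-1}$, collapsing the first term to $\frac1k\big(f(x_1)+\cdots+f(x_{k-1})\big)$ and producing exactly $\frac1k\big(f(x_1)+\cdots+f(x_k)\big)$, which closes the induction.

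Two points deserve a line of care. First, every argument fed to $f$ stays in $\mathbb{R}_+$: since the $x_i$ are positive, so are $s$ and all the convex combinations above, hence $f$ is only ever evaluated where it is assumed convex. Second, the step uses convexity with the unequal weights $\frac{k-1}{k},\frac1k$ rather than merely the midpoint case; this is available from the standard definition $f(\lambda x+(1-\lambda)y)\le \lambda f(x)+(1-\lambda)f(y)$ for all $\lambda\in[0,1]$.

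There is no genuine obstacle here, since this is just the equal-weight instance of Jensen's inequality; the only thing to track is the weight bookkeeping in the inductive step so that the cancellation is clean. As an alternative one could give a non-inductive argument via a supporting line: at the interior point $\bar x=\frac1k\sum x_i$ a convex function admits an affine minorant $\ell$ with $\ell(\bar x)=f(\bar x)$ and $f\ge \ell$ throughout $\mathbb{R}_+$; averaging the inequalities $f(x_i)\ge \ell(x_i)$ over $i$ and using that $\ell$ is affine recovers the claim. I would nonetheless prefer the induction, as it requires nothing beyond the definition of convexity.
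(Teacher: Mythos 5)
Your proof is correct. Note, however, that the paper offers no proof of this lemma at all: it is stated as a known ``particular case of Jensen's inequality'' and used as a black box, so there is no argument of the authors' to compare against. Your induction is the standard self-contained derivation, and the bookkeeping is right: the base case $k=2$ is the $\lambda=\tfrac12$ instance of the convexity definition, the inductive step correctly uses the unequal weights $\tfrac{k-1}{k}$ and $\tfrac{1}{k}$ (which the general definition $f(\lambda x+(1-\lambda)y)\le\lambda f(x)+(1-\lambda)f(y)$ supplies), and the cancellation $\tfrac{k-1}{k}\cdot\tfrac{1}{k-1}=\tfrac{1}{k}$ closes the induction cleanly. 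You are also right to verify that $s$ and the averages stay in $\mathbb{R}_+$, so $f$ is only evaluated where convexity is assumed. Your sketched alternative via a supporting line is equally valid here, since $\bar x=\tfrac1k\sum x_i$ is interior to $(0,\infty)$; it has the advantage of extending immediately to arbitrary weights and to the integral form of Jensen's inequality, whereas your induction has the advantage of using nothing beyond the two-point definition. Either write-up would serve as a complete proof of the lemma the paper leaves unproved.
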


We recall that  the Randi\'c index is defined as
$$
 R(G) = \sum_{uv\in E(G)} \frac1{\sqrt{d_u d_v}} \, .
$$
The following result provides 
a bound on $GA_1$ involving the Randi\'c index.

\begin{theorem} \label{t:r}
For any graph $G$,
$$
GA_1(G) + \D R(G)\geq 2m,
$$
and the equality holds if and only if $G$ is a regular graph.
\end{theorem}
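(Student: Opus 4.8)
The plan is to prove the inequality edge by edge. Writing both indices as sums over $E(G)$ and using $2m=\sum_{uv\in E(G)}2$, the claim is equivalent to
$$
\sum_{uv\in E(G)}\left(\frac{2\sqrt{d_u d_v}}{d_u+d_v}+\frac{\D}{\sqrt{d_u d_v}}\right)\ge \sum_{uv\in E(G)}2,
$$
so it suffices to establish, for every $uv\in E(G)$,
$$
\frac{2\sqrt{d_u d_v}}{d_u+d_v}+\frac{\D}{\sqrt{d_u d_v}}\ge 2.
$$

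First I would fix an edge and assume without loss of generality that $d_u\le d_v$. The main idea is to dispose of $\D$ by the trivial bound $\D\ge d_v=\max\{d_u,d_v\}$, which turns the two-variable estimate into a one-variable one: it is enough to prove $\frac{2\sqrt{d_u d_v}}{d_u+d_v}+\sqrt{d_v/d_u}\ge 2$. Setting $r=\sqrt{d_v/d_u}\ge 1$, the left-hand side becomes $\frac{2r}{1+r^2}+r$, and after clearing denominators the inequality is equivalent to $r^3-2r^2+3r-2\ge 0$. The one genuinely computational step is the factorization
$$
r^3-2r^2+3r-2=(r-1)(r^2-r+2),
$$
whose quadratic factor has negative discriminant and hence is positive; since $r\ge 1$ the product is nonnegative, settling the edge-wise inequality. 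Summing over all edges then yields $GA_1(G)+\D R(G)\ge 2m$.

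For the equality case, the argument above shows that equality holds for a given edge $uv$ precisely when both $r=1$ (forcing $d_u=d_v$) and $\D=d_v$; together these give $d_u=d_v=\D$. Imposing this on every edge, and using that $G$ is connected with $E\neq\emptyset$ so that every vertex is an endpoint of some edge, forces every vertex to have degree $\D$, i.e. $G$ is regular. The converse is immediate: if $G$ is $\D$-regular then each term of $GA_1$ equals $1$ and each term of $R$ equals $1/\D$, whence $GA_1(G)+\D R(G)=m+\D\cdot(m/\D)=2m$.

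I do not anticipate a serious obstacle. The only delicate point is that the reduction $\D\ge\max\{d_u,d_v\}$ not lose the equality case; this is why the equality discussion must combine the two conditions $r=1$ and $\D=d_v$ rather than analyze either term in isolation.
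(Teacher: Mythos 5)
Your proof is correct, and it takes a somewhat different route from the paper's. The paper also works termwise, but its mechanism is the elementary inequality $\frac{a}{b}+\frac{b}{a}\ge 2$ applied with $a=2\sqrt{d_ud_v}$, $b=d_u+d_v$, which yields $GA_1(G)+\sum_{uv\in E(G)}\frac{d_u+d_v}{2\sqrt{d_ud_v}}\ge 2m$; it then replaces $\frac{d_u+d_v}{2}$ by $\D$ to get the $\D R(G)$ term, with equality forcing $2\sqrt{d_ud_v}=d_u+d_v=2\D$ on every edge. You instead substitute $\D\ge\max\{d_u,d_v\}$ first, normalize by $r=\sqrt{d_v/d_u}\ge 1$, and verify the resulting one-variable inequality via the factorization $r^3-2r^2+3r-2=(r-1)(r^2-r+2)$, which checks out (and the discriminant of the quadratic factor is indeed $-7<0$). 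Interestingly, your intermediate bound $\frac{\D}{\sqrt{d_ud_v}}\ge r$ is sharper than the paper's $\frac{\D}{\sqrt{d_ud_v}}\ge\frac{1+r^2}{2r}$, since $r\ge\frac{1+r^2}{2r}$ for $r\ge 1$; so you in fact prove a marginally stronger edgewise inequality, at the cost of a WLOG reduction and a cubic computation, whereas the paper's two-line AM--GM argument is shorter. Your equality analysis is also handled correctly: you rightly insist that both tightness conditions ($r=1$ and $\D=d_v$) hold simultaneously on each edge, which together with connectedness gives regularity, matching the paper's conclusion.
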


\begin{proof}
It is well-known that for all $a,b >0$,
$$
\frac{a}{b}+\frac{b}{a}\geq 2,
$$
and the equality holds if and only if $a=b$.
Applying this inequality, we obtain
$$
\sum_{uv\in E(G)}\frac{2\sqrt{d_u d_v}}{d_u + d_v}+ \sum_{uv\in E(G)}\frac{d_u + d_v}{2\sqrt{d_u d_v}}\geq 2m.
$$
Therefore,
$$
\sum_{uv\in E(G)}\frac{2\sqrt{d_u d_v}}{d_u + d_v}+ \sum_{uv\in E(G)}\frac{\D}{\sqrt{d_u d_v}}\geq 2m.
$$
and we have
$$
GA_1(G) + \D R(G)\geq 2m.
$$

To conclude the proof we only need to observe  that the above equality holds if and only if $2\sqrt{d_u d_v}=d_u + d_v=2\Delta$ for every $uv\in E(G)$.
\end{proof}

In 1998 Bollob\'{a}s and Erd\"{o}s \cite{BE} generalized the Randi\'{c} index by replacing $1/2$ by any real number. Thus, for $\alpha \in \mathbb{R}\setminus \{0\}$, the \emph{general Randi\'{c} index} is defined as 
$$
R_\a = R_\a(G) = \sum_{uv\in E(G)} (d_u d_v)^\a .
$$
The general Randi\'{c} index, also called \emph{variable Zagreb index} in 2004 by Mili{c}evi\'{c} and Nikoli\'{c}  \cite{MN}, has been extensively studied \cite{LG}.
Note that $R_{-1/2}$ is the usual Randi\'c index, $R_{1}$ is the second Zagreb index $M_2$,
$R_{-1}$ is the modified Zagreb index \cite{NKMT}, etc.
In Randi\'{c}'s original paper \cite{R}, in addition to the particular case $\alpha=-1/2$, also the index with $\alpha=-1$ was briefly considered.

\smallskip

Next, we will prove some bounds on $GA_1$ involving the general Randi\'{c} index. To this end, we need the following additional tool.


\begin{lemma}{\rm \cite[Lemma 3]{S}} \label{l:h}
Let $h$ be the function $h(x,y)=\frac{2xy}{x+y}$ with $\d\le x,y \le \D$. Then
$$
\d \le h(x,y) \le \D.
$$
Furthermore, the lower (respectively, upper) bound is attained if and only if $x=y=\d$ (respectively, $x=y=\D$).
\end{lemma}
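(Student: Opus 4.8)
The plan is to recognize that $h(x,y)=\frac{2xy}{x+y}$ is simply the harmonic mean of $x$ and $y$, which for positive arguments always lies between $\min(x,y)$ and $\max(x,y)$; restricting the arguments to $[\delta,\Delta]$ then yields the two-sided bound immediately. Concretely, assuming without loss of generality that $x\le y$, I would compute the two elementary differences
$$
h(x,y)-x=\frac{x(y-x)}{x+y}\ge 0,\qquad y-h(x,y)=\frac{y(y-x)}{x+y}\ge 0,
$$
so that $x\le h(x,y)\le y$. Since $\delta\le x$ and $y\le\Delta$, this gives $\delta\le h(x,y)\le\Delta$ at once.

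The route I would actually take, however, is via monotonicity, because it makes the equality cases transparent. A direct computation gives $\frac{\partial h}{\partial x}=\frac{2y^2}{(x+y)^2}>0$ and, by symmetry, $\frac{\partial h}{\partial y}=\frac{2x^2}{(x+y)^2}>0$ on the whole box $[\delta,\Delta]^2$. Hence $h$ is strictly increasing in each variable, so it attains its minimum over the box only at the corner $(\delta,\delta)$ and its maximum only at the corner $(\Delta,\Delta)$. Evaluating $h(\delta,\delta)=\delta$ and $h(\Delta,\Delta)=\Delta$ then simultaneously establishes the inequalities and pins down where they are sharp.

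For the equality characterizations I would argue as follows. Strict monotonicity forces $h(x,y)=\delta$ to hold only at $(x,y)=(\delta,\delta)$; equivalently, the factored identity $h(x,y)-x=\frac{x(y-x)}{x+y}$ shows $h=x$ iff $x=y$, and then $h=\delta$ forces $x=\delta$, so $x=y=\delta$. The upper bound is handled symmetrically using $y-h(x,y)=\frac{y(y-x)}{x+y}$, yielding $x=y=\Delta$.

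There is essentially no hard step here; the only point needing a little care is making the ``if and only if'' rigorous, i.e.\ ruling out that the extreme values $\delta$ or $\Delta$ could be achieved at points of the box other than the two corners. The strict positivity of both partial derivatives settles this cleanly, which is why I regard the monotonicity argument as the cleanest packaging of the entire statement.
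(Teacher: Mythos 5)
Your proof is correct. Note first that the paper itself does not prove this lemma at all: it is imported verbatim from \cite[Lemma 3]{S}, so there is no in-paper argument to compare against, and your write-up effectively supplies the missing proof. Both of your routes are sound. The elementary identities $h(x,y)-x=\frac{x(y-x)}{x+y}$ and $y-h(x,y)=\frac{y(y-x)}{x+y}$ (valid for $x\le y$) correctly place the harmonic mean between $\min(x,y)$ and $\max(x,y)$, which already gives $\delta\le h(x,y)\le\Delta$. Your preferred monotonicity argument is also right: $\frac{\partial h}{\partial x}=\frac{2y^2}{(x+y)^2}>0$ and $\frac{\partial h}{\partial y}=\frac{2x^2}{(x+y)^2}>0$ on $[\delta,\Delta]^2$, and strict monotonicity in each variable separately does pin the minimum uniquely at $(\delta,\delta)$ and the maximum uniquely at $(\Delta,\Delta)$ (for any $(x,y)\ne(\delta,\delta)$, say $x>\delta$, one has $h(x,y)>h(\delta,y)\ge h(\delta,\delta)=\delta$, and symmetrically for the maximum), which is exactly the ``if and only if'' the lemma asserts. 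The monotonicity packaging is indeed the cleaner of your two arguments for the equality characterization, since the elementary-difference route needs the extra two-step observation you correctly supply: $h=\delta$ forces $h=x$ and $x=\delta$, and $h=x$ forces $x=y$. No gaps.
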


As we will show in Theorems \ref{t:z1} and \ref{t:lb55}, bounds on $R_{\a}$ immediately impose bounds on $GA_1$.

\begin{theorem} \label{t:z1}
Let $G$ be a graph  and $\a \in \mathbb{R} \setminus\{0\}$. Then the following statements hold. 
\begin{enumerate}[{\rm (a)}]
\item If $\a \le -1/2$, then $\d^{-2\a} R_{\a}(G) \leq GA_1(G)\leq \D^{-2\a} R_{\a}(G)$.
\item $\a \ge -1/2$, then $\d \D^{-2\a-1} R_{\a}(G) \leq GA_1(G)\leq \D \d^{-2\a-1} R_{\a}(G) $.
\end{enumerate}
Furthermore, each equality holds if and only if $G$ is a regular graph.
\end{theorem}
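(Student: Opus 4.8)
The plan is to establish all four inequalities edgewise and then sum over $E(G)$, since each bound compares $GA_1(G)=\sum_{uv\in E(G)} g(d_u,d_v)$ with $R_\a(G)=\sum_{uv\in E(G)}(d_ud_v)^\a$ term by term, up to a constant factor depending only on $\d,\D,\a$. The key observation is the identity
$$
\frac{2\sqrt{d_ud_v}}{d_u+d_v}=\frac{1}{\sqrt{d_ud_v}}\cdot\frac{2d_ud_v}{d_u+d_v}=h(d_u,d_v)\,(d_ud_v)^{-1/2},
$$
where $h(x,y)=\frac{2xy}{x+y}$ is the harmonic mean treated in Lemma \ref{l:h}. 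Dividing by $(d_ud_v)^\a$ gives the clean relation
$$
\frac{g(d_u,d_v)}{(d_ud_v)^\a}=h(d_u,d_v)\,(d_ud_v)^{-\a-1/2},
$$
so that comparing the summand of $GA_1$ with that of $R_\a$ reduces to bounding a product of two positive factors, each of which I can control separately.

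For the first factor, Lemma \ref{l:h} yields $\d\le h(d_u,d_v)\le\D$ for every edge. For the second factor I use $\d^2\le d_ud_v\le\D^2$ together with the monotonicity of $t\mapsto t^{-\a-1/2}$, whose direction is governed precisely by the sign of $-\a-1/2$, i.e.\ by whether $\a\le-1/2$ or $\a\ge-1/2$. In case (a), $-\a-1/2\ge0$, so $(d_ud_v)^{-\a-1/2}$ lies between $(\d^2)^{-\a-1/2}=\d^{-2\a-1}$ and $(\D^2)^{-\a-1/2}=\D^{-2\a-1}$; multiplying the matching extremes of the two factors (both minima for the lower bound, both maxima for the upper) gives $\d^{-2\a}\le g(d_u,d_v)/(d_ud_v)^\a\le\D^{-2\a}$. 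In case (b), $-\a-1/2\le0$ reverses the monotonicity of the power, and the same bookkeeping produces $\d\D^{-2\a-1}\le g(d_u,d_v)/(d_ud_v)^\a\le\D\d^{-2\a-1}$. Summing each inequality over $E(G)$ then yields the two displayed estimates.

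For the equality discussion, I would note that a summed inequality is tight exactly when every edge is tight, and that the two factors attain their extremes compatibly: by Lemma \ref{l:h} one has $h(d_u,d_v)=\d$ (resp.\ $=\D$) only when $d_u=d_v=\d$ (resp.\ $d_u=d_v=\D$), and in that situation $d_ud_v$ already equals $\d^2$ (resp.\ $\D^2$), so the second factor is automatically extremal as well. Hence tightness forces every edge to join two vertices of the same extremal degree, and connectedness of $G$ then forces $G$ to be regular; conversely, when $\d=\D$ every bound is trivially an equality. The one point requiring care—and essentially the only real obstacle—is precisely this compatibility of the equality conditions of the two factors, together with keeping the monotonicity direction of the power straight across the boundary case $\a=-1/2$, where the second factor degenerates to $1$ and parts (a) and (b) must collapse to the same pair of bounds.
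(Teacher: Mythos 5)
Your derivation of the four inequalities is correct and follows essentially the same route as the paper: the paper likewise starts from Lemma \ref{l:h} in the form $\frac{\d}{\sqrt{d_ud_v}}\le \frac{2\sqrt{d_ud_v}}{d_u+d_v}\le \frac{\D}{\sqrt{d_ud_v}}$ and then multiplies through by the appropriate power of $d_ud_v$, with the sign of $2\a+1$ deciding which of $\d^{2\a+1}$, $\D^{2\a+1}$ goes on which side --- exactly your factorization $g=h\cdot(d_ud_v)^{-1/2}$ combined with the monotonicity of $t\mapsto t^{-\a-1/2}$.

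However, your equality discussion contains a false step, and it occurs precisely at the point you single out as the only delicate one. The compatibility claim --- that when $h(d_u,d_v)$ is extremal the factor $(d_ud_v)^{-\a-1/2}$ is ``automatically extremal as well'' --- is true only in case (a). In case (b) the exponent $-\a-1/2$ is negative (take $\a>-1/2$), so for the lower bound $\d\,\D^{-2\a-1}$ the minimum of $h$ is attained at $d_u=d_v=\d$, while the minimum of $(d_ud_v)^{-\a-1/2}$ is attained at the \emph{opposite} endpoint $d_ud_v=\D^2$; when $h=\d$ one has $d_ud_v=\d^2$, putting the second factor at its maximum, not its minimum. The conclusion is nevertheless salvageable, and in fact becomes easier: since each of the two factors is bounded by its respective extreme, equality of the product with the product of the two extremes forces \emph{both} factors to be extremal on every edge, i.e.\ $d_u=d_v=\d$ and $d_ud_v=\D^2$ simultaneously, which is possible only if $\d=\D$; hence equality in case (b) forces $G$ to be regular outright (and similarly for the upper bound there). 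With that one observation substituted for your compatibility claim, the proof is complete --- and, for what it is worth, then slightly more careful than the paper's own equality analysis, which invokes only the $h$-factor from Lemma \ref{l:h} and is silent about the power factor in case (b).
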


\begin{proof}
Lemma \ref{l:h} gives
$$
\frac{\d}{\sqrt{d_u d_v}}\leq \frac{2\sqrt{d_u d_v}}{d_u + d_v}\leq  \frac{\D}{\sqrt{d_u d_v}}\,.
$$
If $\a \ge -1/2$, then
$$
\d(d_ud_v)^{\a}\leq \D^{2\a+1} \frac{2\sqrt{d_u d_v}}{d_u + d_v}\,,
\qquad \d^{2\a+1} \frac{2\sqrt{d_u d_v}}{d_u + d_v}\leq  \D(d_ud_v)^{\a}.
$$
If $\a \le -1/2$, then
$$
\d(d_ud_v)^{\a}\leq \d^{2\a+1} \frac{2\sqrt{d_u d_v}}{d_u + d_v}\,,
\qquad \D^{2\a+1} \frac{2\sqrt{d_u d_v}}{d_u + d_v}\leq  \D(d_ud_v)^{\a}.
$$
We obtain the results by summing up these inequalities for $uv\in E(G)$.

If the graph is regular, then the lower and upper bounds are the same, and they are equal to $GA_1(G)$.
If the equality holds in the lower bound, then Lemma \ref{l:h} gives
$d_u = d_v = \d$ for every $uv\in E(G)$; hence, $d_u = \d$ for every $u\in V(G)$ and the graph is regular.
If the equality is attained in the upper bound, then Lemma \ref{l:h} gives
$d_u = d_v = \D$ for every $uv\in E(G)$ and we conclude $d_u = \D$ for every $u\in V(G)$.
\end{proof}

We would emphasize the following direct consequence of Theorem \ref{t:z1}. The upper bound was previously stated in \cite{RS2} and the lower bound in \cite{S}. 

\begin{corollary}
For any graph $G$, 
$$\d R(G) \leq GA_1(G)\leq \D R(G),$$
and each equality holds if and only if $G$ is regular.
\end{corollary}

\begin{theorem} \label{t:lb55}
Let $G$ be a graph  and $\a \in \mathbb{R} \setminus\{0\}$. Then the following statements hold. 
\begin{enumerate}[{\rm (a)}]
\item If $\a \le 1/2$, then $\d^{-2\a+1}\Delta^{-1} R_{\a}(G) \leq GA_1(G)\leq \Delta^{-2\a+1}\delta^{-1} R_{\a}(G)$.
\item $\a \ge 1/2$, then $\D^{-2\a} R_{\a}(G) \leq GA_1(G)\leq \d^{-2\a} R_{\a}(G) $.
\end{enumerate}
Furthermore, each equality holds if and only if $G$ is a regular graph.
\end{theorem}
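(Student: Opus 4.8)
The plan is to run the same scheme as in the proof of Theorem~\ref{t:z1}, but to replace the harmonic-mean input of Lemma~\ref{l:h} by the elementary arithmetic-mean bound
$$
\d \le \frac{d_u+d_v}{2} \le \D , \qquad uv\in E(G),
$$
which holds at once because $d_u,d_v\in[\d,\D]$. Writing the summand of $GA_1$ as $\frac{2\sqrt{d_u d_v}}{d_u+d_v}=\frac{\sqrt{d_u d_v}}{(d_u+d_v)/2}$ and applying the displayed bound to the denominator, I would first record the key edge-wise estimate
$$
\frac{\sqrt{d_u d_v}}{\D}\ \le\ \frac{2\sqrt{d_u d_v}}{d_u+d_v}\ \le\ \frac{\sqrt{d_u d_v}}{\d}\,,
$$
which here plays exactly the role that $\frac{\d}{\sqrt{d_u d_v}}\le \frac{2\sqrt{d_u d_v}}{d_u+d_v}\le \frac{\D}{\sqrt{d_u d_v}}$ played in Theorem~\ref{t:z1}.

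Next I would pass from these bounds, which involve $(d_u d_v)^{1/2}$, to bounds involving $(d_u d_v)^{\a}$ using the monotonicity of $s\mapsto s^{1/2-\a}$ together with $\d^2\le d_u d_v\le \D^2$. When $\a\le 1/2$ the exponent $1/2-\a$ is nonnegative, giving $\d^{1-2\a}\le (d_u d_v)^{1/2-\a}\le \D^{1-2\a}$; when $\a\ge 1/2$ the exponent is nonpositive and the two outer bounds are interchanged. Combining each case with the key estimate yields, in case (a),
$$
\d^{-2\a+1}\D^{-1}(d_u d_v)^{\a}\ \le\ \frac{2\sqrt{d_u d_v}}{d_u+d_v}\ \le\ \D^{-2\a+1}\d^{-1}(d_u d_v)^{\a},
$$
and, in case (b),
$$
\D^{-2\a}(d_u d_v)^{\a}\ \le\ \frac{2\sqrt{d_u d_v}}{d_u+d_v}\ \le\ \d^{-2\a}(d_u d_v)^{\a}.
$$
Summing over $uv\in E(G)$ and recalling $R_\a(G)=\sum_{uv\in E(G)}(d_u d_v)^{\a}$ produces the four stated inequalities.

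Finally I would settle the equality cases. If $G$ is regular the lower and upper bounds coincide with $GA_1(G)$, so equality is immediate. For the converse one tracks where each elementary step can be tight. In case (b) the binding step is the arithmetic-mean bound: its equality forces $d_u=d_v=\D$ on every edge for the lower bound, and $d_u=d_v=\d$ on every edge for the upper bound, so $G$ is regular. In case (a) the two estimates used on each edge pull in opposite directions—equality in one requires $d_u=d_v=\d$ while equality in the other requires $d_u=d_v=\D$—so they can hold simultaneously only when $\d=\D$, and again equality throughout forces regularity.

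The computations are routine; the only points that require care are keeping the inequality directions correct according to the sign of $1/2-\a$, and, in the equality analysis of case (a), observing that the two bounds are simultaneously tight precisely on a regular graph.
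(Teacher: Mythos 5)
Your proposal is correct and takes essentially the same route as the paper: the paper likewise factors each summand as $(d_u d_v)^{\a}(d_u d_v)^{1/2-\a}\cdot\frac{2}{d_u+d_v}$ and bounds the two non-$R_\a$ factors edge-wise via $\d\le\frac{d_u+d_v}{2}\le\D$ and $\d^{1-2\a}\le(d_u d_v)^{1/2-\a}\le\D^{1-2\a}$ (reversed when $\a\ge 1/2$), then sums over $E(G)$. Your equality analysis, distinguishing the one-sided tightness in case (b) from the opposing constraints in case (a), is in fact somewhat more careful than the paper's one-line treatment, but reaches the same conclusion.
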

\begin{proof}
Notice that
$$
GA_1(G)
= \sum_{uv\in E(G)}\frac{2\sqrt{d_u d_v}}{d_u + d_v}
=2  \sum_{uv\in E(G)} \frac{(d_u d_v)^{\a}(d_u d_v)^{-\a+1/2}}{d_u+d_v}
\,.
$$
Now, if $\a\le 1/2$, then $\d^{-2\a+1} \le (d_u d_v)^{-\a+1/2} \le \D^{-2\a+1}$, which implies  that
$$\d^{-2\a+1}\Delta^{-1} R_{\a}(G) \leq GA_1(G)\leq \Delta^{-2\a+1}\delta^{-1} R_{\a}(G).$$
Analogously, if $\a\ge 1/2$, then $\D^{-2\a+1} \le (d_u d_v)^{-\a+1/2} \le \d^{-2\a+1}$,
which implies that
$$\D^{-2\a} R_{\a}(G) \leq GA_1(G)\leq \d^{-2\a} R_{\a}(G) .$$
If the graph is regular, then the lower and upper bounds are the same, and they are equal to $GA_1(G)$.
If a bound is attained, then we have either
$d_ud_v = \d$ or $d_ud_v = \D$ for every $uv\in E(G)$, so that $G$ is a regular graph.
\end{proof}
It is readily seen that if $\alpha <0$, then Theorem \ref{t:z1} gives better results than Theorem \ref{t:lb55} and, if $\alpha >0$, then Theorem \ref{t:lb55} gives better results than Theorem \ref{t:z1}.

The well-known P\'{o}lya-Szeg\"{o} inequality can be stated as follows. 

\begin{lemma}{\rm \cite[p.62]{HLP}} \label{l:PS}
If $0<n_1 \le a_j \le N_1$ and $0<n_2 \le b_j \le N_2$ for $1\le j \le k$, then
$$
\left(\sum_{j=1}^k a_j^2 \right)^{1/2} \left(\sum_{j=1}^k b_j^2 \right)^{1/2}
\le \frac12 \left(\sqrt{\frac{N_1N_2}{n_1n_2}}+ \sqrt{\frac{n_1n_2}{N_1N_2}} \,\right)\left(\sum_{j=1}^k a_jb_j \right) .
$$
\end{lemma}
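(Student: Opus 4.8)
The plan is to reduce the inequality to a one–variable quadratic positivity argument combined with the arithmetic--geometric mean inequality. The crucial observation is that the hypotheses $n_1 \le a_j \le N_1$ and $n_2 \le b_j \le N_2$ translate into two–sided control of each \emph{ratio} $a_j/b_j$, namely $n_1/N_2 \le a_j/b_j \le N_1/n_2$. From this I would form, for each index $j$, the product of the two nonnegative quantities $\big(a_j/b_j - n_1/N_2\big)$ and $\big(N_1/n_2 - a_j/b_j\big)$, which is itself nonnegative. Multiplying through by $b_j^2>0$ and expanding yields the pointwise bound
$$
a_j^2 + \frac{n_1 N_1}{n_2 N_2}\, b_j^2 \le \left(\frac{N_1}{n_2} + \frac{n_1}{N_2}\right) a_j b_j .
$$

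Summing this inequality over $j=1,\dots,k$ gives
$$
\sum_{j=1}^k a_j^2 + \frac{n_1 N_1}{n_2 N_2}\sum_{j=1}^k b_j^2 \le \left(\frac{N_1}{n_2} + \frac{n_1}{N_2}\right)\sum_{j=1}^k a_j b_j .
$$
Now I would invoke the arithmetic--geometric mean inequality on the left–hand side: writing $X=\sum_{j} a_j^2$ and $Y=\sum_{j} b_j^2$ and using $X + cY \ge 2\sqrt{c}\,\sqrt{X}\,\sqrt{Y}$ with $c = n_1 N_1/(n_2 N_2)$, the left side is bounded below by $2\sqrt{n_1 N_1/(n_2 N_2)}\,\big(\sum_j a_j^2\big)^{1/2}\big(\sum_j b_j^2\big)^{1/2}$. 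Dividing by the factor $2\sqrt{n_1 N_1/(n_2 N_2)}$ then leaves only an algebraic simplification of the resulting constant.

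The remaining work is to verify that $\tfrac12\sqrt{n_2 N_2/(n_1 N_1)}\,\big(N_1/n_2 + n_1/N_2\big)$ collapses exactly to $\tfrac12\big(\sqrt{N_1 N_2/(n_1 n_2)} + \sqrt{n_1 n_2/(N_1 N_2)}\big)$; this is routine once one distributes the square–root factor across the two summands and squares each term. The only genuinely nonobvious step is the first one — choosing to exploit the bounds on the ratio $a_j/b_j$ rather than on $a_j$ and $b_j$ separately — since this is precisely what produces a quadratic whose nonnegativity encodes the sharp constant. Everything after that is a single summation, one application of arithmetic--geometric mean, and bookkeeping.
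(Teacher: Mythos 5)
Your proof is correct: the pointwise bound $a_j^2 + \tfrac{n_1N_1}{n_2N_2}b_j^2 \le \bigl(\tfrac{N_1}{n_2}+\tfrac{n_1}{N_2}\bigr)a_jb_j$ follows exactly as you say from $\bigl(\tfrac{a_j}{b_j}-\tfrac{n_1}{N_2}\bigr)\bigl(\tfrac{N_1}{n_2}-\tfrac{a_j}{b_j}\bigr)\ge 0$, the summation and the AM--GM step $X+cY\ge 2\sqrt{c}\sqrt{XY}$ are valid, and the constant simplifies to $\tfrac12\bigl(\sqrt{N_1N_2/(n_1n_2)}+\sqrt{n_1n_2/(N_1N_2)}\bigr)$ as claimed. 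Note that the paper itself offers no proof to compare against --- it states the lemma with only the citation \cite[p.62]{HLP} --- and your argument is precisely the classical Pólya--Szegő proof found there, so you have in effect reconstructed the standard reference proof.
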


Theorems \ref{t:mz}, \ref{t:mz2} and \ref{t:mzz} will show the usefulness of P\'{o}lya-Szeg\"{o} inequality to deduce lower bounds on $GA_1$, as well as the usefulness of Cauchy-Schwarz inequality to deduce upper  bounds.

\begin{theorem} \label{t:mz}
For any graph $G$,
$$
\frac{2\D\d^2}{\D^2+\d^2} \, \sqrt{m R_{-1}(G)}\,\le GA_1(G) \le \D \sqrt{m R_{-1}(G)}\, ,
$$
and each equality holds if and only if $G$ is a regular graph.
\end{theorem}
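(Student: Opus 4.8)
The plan is to exploit the factorization
$$
\frac{2\sqrt{d_u d_v}}{d_u+d_v}
= \frac{2 d_u d_v}{d_u+d_v}\cdot\frac{1}{\sqrt{d_u d_v}}
= h(d_u,d_v)\cdot\frac{1}{\sqrt{d_u d_v}},
$$
where $h$ is the function of Lemma \ref{l:h}. This is the key move: it splits each summand of $GA_1(G)$ into a factor that is controlled by the degree bounds via Lemma \ref{l:h} and a factor whose square is exactly a summand of $R_{-1}(G)$. Throughout I work with the two families $a_{uv}=h(d_u,d_v)$ and $b_{uv}=(d_u d_v)^{-1/2}$, indexed by $uv\in E(G)$, so that $\sum a_{uv}b_{uv}=GA_1(G)$, $\sum b_{uv}^2=R_{-1}(G)$, and $\sum 1 = m$.

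For the upper bound I would apply the Cauchy--Schwarz inequality to these families, obtaining
$$
GA_1(G) \le \left(\sum_{uv\in E(G)} h(d_u,d_v)^2\right)^{1/2}\left(\sum_{uv\in E(G)}\frac{1}{d_u d_v}\right)^{1/2}.
$$
The second factor is $R_{-1}(G)^{1/2}$, and the upper estimate $h(d_u,d_v)\le\D$ from Lemma \ref{l:h} bounds the first factor by $(\D^2 m)^{1/2}=\D\sqrt m$, which yields $GA_1(G)\le\D\sqrt{m R_{-1}(G)}$.

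For the lower bound I would apply the P\'olya--Szeg\H{o} inequality (Lemma \ref{l:PS}) to the same two families. The main bookkeeping is reading off the four parameters correctly: Lemma \ref{l:h} gives $\d\le a_{uv}=h(d_u,d_v)\le\D$, so $n_1=\d$, $N_1=\D$, while $\d\le d_u,d_v\le\D$ forces $\D^{-1}\le b_{uv}=(d_ud_v)^{-1/2}\le\d^{-1}$, so $n_2=\D^{-1}$, $N_2=\d^{-1}$. A short computation gives $N_1N_2/(n_1n_2)=\D^2/\d^2$, so the P\'olya--Szeg\H{o} constant is $\tfrac12(\D/\d+\d/\D)=(\D^2+\d^2)/(2\D\d)$. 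Rearranging Lemma \ref{l:PS} into a lower bound for $\sum a_{uv}b_{uv}=GA_1(G)$ and invoking $\sum h(d_u,d_v)^2\ge\d^2 m$ (again Lemma \ref{l:h}) together with $\sum b_{uv}^2=R_{-1}(G)$ produces
$$
GA_1(G)\ge\frac{2\D\d}{\D^2+\d^2}\,(\d^2 m)^{1/2}\,R_{-1}(G)^{1/2}=\frac{2\D\d^2}{\D^2+\d^2}\sqrt{m R_{-1}(G)},
$$
as claimed.

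For the equality characterizations the decisive inequalities are the degree bounds on $h$, not the integral inequalities themselves. In the upper bound, $\sum h(d_u,d_v)^2=\D^2 m$ forces $h(d_u,d_v)=\D$ on every edge, which by Lemma \ref{l:h} means $d_u=d_v=\D$ for all $uv$, i.e.\ $G$ is regular; conversely, regularity makes both $a_{uv}$ and $b_{uv}$ constant, so Cauchy--Schwarz is an equality as well. Symmetrically, equality in the lower bound forces $h(d_u,d_v)=\d$ on every edge, hence $d_u=d_v=\d$ throughout and $G$ is regular, while regularity collapses the P\'olya--Szeg\H{o} constant to $1$ and turns that inequality into an identity. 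I expect the only genuinely delicate point to be the P\'olya--Szeg\H{o} step: one must track the minimum and maximum of the reciprocal family $b_{uv}$ correctly, since their order is reversed relative to $d_u d_v$, and one must verify that the attainment condition used for equality is the constraint coming from the crude bound $h\le\D$ (respectively $h\ge\d$) rather than from the P\'olya--Szeg\H{o} estimate.
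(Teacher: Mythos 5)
Your proposal is correct and takes essentially the same route as the paper's proof: the same factorization $\frac{2\sqrt{d_ud_v}}{d_u+d_v}=h(d_u,d_v)\cdot (d_ud_v)^{-1/2}$, Cauchy--Schwarz together with $h\le\D$ (Lemma \ref{l:h}) for the upper bound, P\'olya--Szeg\H{o} (Lemma \ref{l:PS}) with the constant $\frac{\D^2+\d^2}{2\D\d}$ together with $h\ge\d$ for the lower bound, and the equality characterization read off from the degree bounds on $h$ exactly as the paper does. Your parameter bookkeeping in the P\'olya--Szeg\H{o} step ($n_1=\d$, $N_1=\D$, $n_2=\D^{-1}$, $N_2=\d^{-1}$) matches the paper's computation, so there is nothing to correct.
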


\begin{proof}
First of all, Lemma \ref{l:h} gives
\begin{equation}\label{Eq-Lemma2.6}
\d \le \frac{2d_u d_v}{d_u + d_v} \le \D  .
\end{equation}
We also have
\begin{equation}
\frac{1}{\D} \le \frac{1}{\sqrt{d_u d_v}} \le \frac{1}{\d}\,.
\end{equation}

These inequalities and  P\'{o}lya-Szeg\"{o} inequality  give
$$
\begin{aligned}
GA_1(G) & = \sum_{uv\in E(G)}\frac{2\sqrt{d_u d_v}}{d_u + d_v}
\ge
\frac{\left(\sum_{uv\in E(G)} \frac{1}{d_ud_v} \right)^{1/2} \left(\sum_{uv\in E(G)} \frac{(2d_u d_v)^2}{(d_u + d_v)^2}  \right)^{1/2}}
{\frac12 \left(\frac{\D}{\d} + \frac{\d}{\D} \right)}
\\
& \ge \frac{2\D\d}{\D^2+\d^2} \, \sqrt{R_{-1}(G)}\,
\left(\sum_{uv\in E(G)} \d^2 \right)^{1/2}\\
&= \frac{2\D\d^2}{\D^2+\d^2} \, \sqrt{m R_{-1}(G)}
\,.
\end{aligned}
$$

Cauchy-Schwarz inequality gives
$$
\begin{aligned}
GA_1(G) & = \sum_{uv\in E(G)}\frac{2\sqrt{d_u d_v}}{d_u + d_v}
\le
\left(\sum_{uv\in E(G)} \frac{1}{d_ud_v} \right)^{1/2} \left(\sum_{uv\in E(G)} \frac{(2d_u d_v)^2}{(d_u + d_v)^2}  \right)^{1/2}
\\
& \le \sqrt{R_{-1}(G)}\,
\left(\sum_{uv\in E(G)} \D^2 \right)^{1/2}\\
&= \D \sqrt{m R_{-1}(G)}
\,.
\end{aligned}
$$

If the graph is regular, then the lower and upper bounds are the same, and they are equal to $GA_1(G)$.
If the equality holds in the lower bound, then the left hand side equality holds in \eqref{Eq-Lemma2.6}, so that Lemma \ref{l:h} gives
$d_u = d_v = \d$ for every $uv\in E(G)$; hence, $d_u = \d$ for every $u\in V(G)$ and the graph is regular.
Analogously, if the equality holds in the upper bound, then the right hand side equality holds in \eqref{Eq-Lemma2.6}, so that Lemma \ref{l:h} gives
$d_u = d_v = \D$ for every $uv\in E(G)$ and we can conclude that $d_u = \d$ for every $u\in V(G)$.
\end{proof}

\begin{theorem} \label{t:mz2}
For any graph $G$,
$$
 \frac{4\D^2\d^2\sqrt{2\d M_1(G)R_{-1}(G)}}{(\D^2 + \d^2)(\d + \D)^2}\le GA_1(G) \le \frac{ \sqrt{2\D M_1(G)R_{-1}(G)} }{2}\,,
$$
and each equality holds if and only if $G$ is a regular graph.
\end{theorem}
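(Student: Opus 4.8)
The plan is to reuse the Cauchy--Schwarz/P\'olya--Szeg\"o dichotomy of Theorem~\ref{t:mz}, but with the trivial factor $m=\sum_{uv\in E(G)}1$ replaced by $M_1(G)=\sum_{uv\in E(G)}(d_u+d_v)$. I would split each summand of $GA_1$ as
$$
\frac{2\sqrt{d_u d_v}}{d_u+d_v}=a_{uv}b_{uv},\qquad a_{uv}=\frac1{\sqrt{d_u d_v}},\quad b_{uv}=\frac{2d_u d_v}{d_u+d_v},
$$
so that $\sum_{uv}a_{uv}b_{uv}=GA_1(G)$ and $\sum_{uv}a_{uv}^2=R_{-1}(G)$. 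Everything then reduces to bounding $S:=\sum_{uv\in E(G)}b_{uv}^2$ above and below by multiples of $M_1(G)$.

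For the upper bound I would combine the two elementary estimates $b_{uv}\le\D$ (Lemma~\ref{l:h}) and $b_{uv}\le (d_u+d_v)/2$ (arithmetic--harmonic mean inequality), whose product gives $b_{uv}^2\le\frac{\D}{2}(d_u+d_v)$ and hence $S\le\frac{\D}{2}M_1(G)$. Cauchy--Schwarz then yields $GA_1(G)\le\sqrt{R_{-1}(G)}\,\sqrt{S}\le\sqrt{\frac{\D}{2}M_1(G)R_{-1}(G)}$, which is exactly the claimed upper bound.

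For the lower bound I would apply P\'olya--Szeg\"o (Lemma~\ref{l:PS}) to the same two sequences, using $a_{uv}\in[1/\D,1/\d]$ and $b_{uv}\in[\d,\D]$ (the latter from Lemma~\ref{l:h}); here $N_1N_2/(n_1n_2)=\D^2/\d^2$, so the constant is $\frac12(\frac{\D}{\d}+\frac{\d}{\D})=\frac{\D^2+\d^2}{2\D\d}$ and one gets $GA_1(G)\ge\frac{2\D\d}{\D^2+\d^2}\sqrt{R_{-1}(G)}\,\sqrt{S}$. The delicate point is the lower estimate of $S$. The key is the factorization
$$
b_{uv}=\frac{g(d_u,d_v)^2(d_u+d_v)}{2},\qquad g(d_u,d_v)=\frac{2\sqrt{d_u d_v}}{d_u+d_v},
$$
so that $b_{uv}^2=\frac{1}{4}\,g(d_u,d_v)^4(d_u+d_v)^2$. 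Bounding $g^4$ below by $\big(\frac{2\sqrt{\D\d}}{\D+\d}\big)^4$ (Lemma~\ref{c:t}) and keeping a single factor $d_u+d_v\ge2\d$ gives $b_{uv}^2\ge\frac{8\D^2\d^3}{(\D+\d)^4}(d_u+d_v)$, hence $S\ge\frac{8\D^2\d^3}{(\D+\d)^4}M_1(G)$. Substituting this and simplifying $\frac{2\D\d}{\D^2+\d^2}\sqrt{\frac{8\D^2\d^3}{(\D+\d)^4}}$ reproduces the stated coefficient $\frac{4\D^2\d^2\sqrt{2\d}}{(\D^2+\d^2)(\D+\d)^2}$.

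For the equality cases, in the upper bound equality in $b_{uv}\le\D$ forces $d_u=d_v=\D$ on every edge (Lemma~\ref{l:h}), and in the lower bound equality in $d_u+d_v\ge2\d$ forces $d_u=d_v=\d$ on every edge; since $G$ is connected, either condition makes $G$ regular, and conversely every regular graph turns both bounds into the common value $m=GA_1(G)$. I expect the main obstacle to be pinning down the exact constant in the lower estimate of $S$: the more natural bound $d_u d_v\ge\frac{\d}{2}(d_u+d_v)$ yields a strictly larger (hence still valid) coefficient, so it is precisely the factorization through $g^2$ combined with the crude estimate $d_u+d_v\ge2\d$ that is needed to match the coefficient written in the statement.
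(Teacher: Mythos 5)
Your proof is correct, and it reaches exactly the stated constants, but it regroups the argument differently from the paper. The paper's proof of this theorem first flips each summand, $GA_1(G)\le\sum_{uv\in E(G)}\frac{d_u+d_v}{2\sqrt{d_ud_v}}$, and applies Cauchy--Schwarz and P\'olya--Szeg\"o to the pair $\bigl(\frac{d_u+d_v}{2},\frac{1}{\sqrt{d_ud_v}}\bigr)$; for the lower bound it extracts the factor $\frac{4d_ud_v}{(d_u+d_v)^2}\ge\frac{4\Delta\delta}{(\Delta+\delta)^2}$ termwise \emph{before} invoking P\'olya--Szeg\"o, and then uses $\frac{(d_u+d_v)^2}{4}\gtrless\frac{\delta}{2}(d_u+d_v),\frac{\Delta}{2}(d_u+d_v)$ to reach $M_1$. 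You instead keep the split $\frac{1}{\sqrt{d_ud_v}}\cdot\frac{2d_ud_v}{d_u+d_v}$ throughout (the same pair the paper uses in its proof of Theorem \ref{t:mz}), apply Cauchy--Schwarz and P\'olya--Szeg\"o once each directly to $GA_1$, and push all remaining work into two-sided bounds on $S=\sum_{uv\in E(G)}\bigl(\frac{2d_ud_v}{d_u+d_v}\bigr)^2$; since $\sqrt{g^4}=g^2$, your factor $g^4$ inside $\sqrt{S}$ is the paper's $g^2$ prefactor in disguise, and the two proofs are algebraic rearrangements of one another with identical constants and identical equality analysis (both correctly force $d_u=d_v=\Delta$, resp.\ $d_u=d_v=\delta$, on every edge). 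Your route is arguably tidier: no preliminary termwise flip is needed for the upper bound, and the P\'olya--Szeg\"o step is applied exactly once. Your closing side remark is also accurate: using $d_ud_v\ge\frac{\delta}{2}(d_u+d_v)$ gives $S\ge\frac{2\Delta\delta^2}{(\Delta+\delta)^2}M_1(G)$, which exceeds your $\frac{8\Delta^2\delta^3}{(\Delta+\delta)^4}M_1(G)$ by the factor $\frac{(\Delta+\delta)^2}{4\Delta\delta}\ge1$, so the stated lower bound is not the sharpest this method yields for irregular graphs --- the paper's own proof carries precisely the same slack, via the crude estimate $d_u+d_v\ge2\delta$.
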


\begin{proof}
Notice that
$$
GA_1(G)= \sum_{uv\in E(G)}\frac{2\sqrt{d_u d_v}}{d_u + d_v}
\leq \sum_{uv\in E(G)}\frac{d_u + d_v}{2\sqrt{d_u d_v}}\,.
$$
Using the Cauchy-Schwarz inequality, we obtain
$$
\begin{aligned}
GA_1(G)
& \le \sum_{uv\in E(G)}\frac{d_u + d_v}{2\sqrt{d_u d_v}}\\
&\le \left(\sum_{uv\in E(G)} \frac{(d_u +d_v)^2}{4} \right)^{1/2} \left(\sum_{uv\in E(G)} \frac{1}{d_ud_v}\right)^{1/2}
\\
& \le \left(\frac{ \,\D}{2}\!\!\!\sum_{uv\in E(G)} (d_u +d_v) \right)^{1/2} \left(R_{-1}(G)\right)^{1/2}\\
&= \sqrt{\frac{\D M_1(G)R_{-1}(G)}{2}}\,.
\end{aligned}
$$

Let us prove the lower bound. 
Notice that
$$\frac{2\sqrt{d_u d_v}}{d_u + d_v}= \frac{4d_ud_v}{(d_u + d_v)^2}\frac{{d_u + d_v}}{2\sqrt{d_u d_v}}
\,.
$$
By Lemma \ref{c:t}, we have
$$
\frac{4\D\d}{(\D + \d)^2}\leq\frac{4d_ud_v}{(d_u + d_v)^2}\leq1.
$$
Since
$$
\frac{2\sqrt{d_u d_v}}{d_u + d_v}\geq \frac{4\d\D}{(\d + \D)^2} \frac{{d_u + d_v}}{2\sqrt{d_u d_v}}
\,,
$$
we have
$$
GA_1(G)\geq \frac{4\d\D}{(\d + \D)^2} \sum_{uv\in E(G)}\frac{{d_u + d_v}}{2\sqrt{d_u d_v}}
\,.
$$
Since
$\d \le \frac{d_u + d_v}{2} \le \D,$
$\frac{1}{\D} \le \frac{1}{\sqrt{d_ud_v}} \le \frac{1}{\d}$,
the P\'{o}lya-Szeg\"{o} inequality gives

\begin{align*}
\sum_{uv\in E(G)}\frac{d_u + d_v}{2\sqrt{d_u d_v}}
&\ge
\frac{\left(\displaystyle\sum_{uv\in E(G)}\frac{(d_u+d_v)^{2}}{4} \right)^{1/2} \left(\displaystyle\sum_{uv\in E(G)}\frac{1}{d_ud_v} \right)^{1/2}}
{\frac12 \left(\frac{\D}{\d} + \frac{\d}{\D} \right)} \\
&\ge \frac{\D\d\sqrt{2\d M_1(G)R_{-1}(G)}}{\D^2 + \d^2}.
\end{align*}
Therefore,
$$
GA_1(G)\geq \frac{4\d\D}{(\d + \D)^2} \, \frac{\D\d\sqrt{2\d M_1(G)R_{-1}(G)}}{\D^2 + \d^2}
\,.
$$

If $G$ is a regular graph, then the lower and upper bounds are the same, and they are equal to $GA_1(G)$.
If we have the equality in the upper  bound, then $d_u + d_v = 2\D$  for every $uv\in E(G)$; hence, $d_u = \D$ for every $u\in V(G)$ and so the graph is regular. By analogy we can see that  the equality in the upper leads to  regularity of $G$.
\end{proof}

%
%
%
%

\begin{theorem} \label{t:mzz}
For any graph $G$ and $\a> 0$,
$$
k_{\a} \sqrt{R_{\a}(G) R_{-\a}(G)} \le GA_1(G) \le \sqrt{R_{\a}(G) R_{-\a}(G)}\, ,
$$
with
$$
k_{\a}:=
\left\{
\begin{array}{ll}
\frac{2\D^{1/2}\d^{3/2}}{\D^{2}+\d^{2}} ,\quad &\mbox{if }\ 0 < \a\le 1,\\
\\
\frac{2\D^{\a-1/2}\d^{\a+1/2}}{\D^{2\a}+\d^{2\a}} ,\quad &\mbox{if }\ \a\ge 1,
\end{array}
\right.
$$
and  each inequality holds only if $G$ is a regular graph.
\end{theorem}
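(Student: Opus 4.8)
The plan is to route both inequalities through the size $m$, combining the pointwise estimates for $g(d_u,d_v)=\frac{2\sqrt{d_ud_v}}{d_u+d_v}$ supplied by Lemmas \ref{c:t} and \ref{l:h} with the Cauchy--Schwarz and P\'{o}lya--Szeg\"{o} inequalities. For the upper bound I would first use $g\le 1$ (Lemma \ref{c:t}) to get $GA_1(G)=\sum_{uv\in E(G)}g(d_u,d_v)\le m$. Writing $m=\sum_{uv\in E(G)}(d_ud_v)^{\a/2}(d_ud_v)^{-\a/2}$ and applying Cauchy--Schwarz yields $m\le\big(\sum_{uv\in E(G)}(d_ud_v)^{\a}\big)^{1/2}\big(\sum_{uv\in E(G)}(d_ud_v)^{-\a}\big)^{1/2}=\sqrt{R_\a(G)R_{-\a}(G)}$, so that $GA_1(G)\le m\le\sqrt{R_\a(G)R_{-\a}(G)}$.

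For the lower bound the crucial pointwise inequality is $g(d_u,d_v)^2=\frac{2d_ud_v}{d_u+d_v}\cdot\frac{2}{d_u+d_v}\ge \d\cdot\frac1{\D}=\frac{\d}{\D}$, where I use $\frac{2d_ud_v}{d_u+d_v}\ge\d$ (Lemma \ref{l:h}) and $d_u+d_v\le 2\D$; hence $g\ge\sqrt{\d/\D}$ on every edge and $GA_1(G)\ge\sqrt{\d/\D}\,m$. Next I apply P\'{o}lya--Szeg\"{o} (Lemma \ref{l:PS}) to $a_{uv}=(d_ud_v)^{\a/2}\in[\d^{\a},\D^{\a}]$ and $b_{uv}=(d_ud_v)^{-\a/2}\in[\D^{-\a},\d^{-\a}]$, for which $\sum a_{uv}^2=R_\a(G)$, $\sum b_{uv}^2=R_{-\a}(G)$ and $\sum a_{uv}b_{uv}=m$; this gives $\sqrt{R_\a(G)R_{-\a}(G)}\le C_\a\, m$ with $C_\a=\frac{\D^{2\a}+\d^{2\a}}{2\D^{\a}\d^{\a}}$. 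Combining the two estimates produces $GA_1(G)\ge\frac{\sqrt{\d/\D}}{C_\a}\sqrt{R_\a(G)R_{-\a}(G)}$, and a routine simplification gives $\frac{\sqrt{\d/\D}}{C_\a}=\frac{2\D^{\a-1/2}\d^{\a+1/2}}{\D^{2\a}+\d^{2\a}}$, which is exactly $k_\a$ for $\a\ge 1$.

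The main point to reconcile is the bifurcation at $\a=1$, and this is where I would concentrate the argument. The estimate $GA_1(G)\ge\phi(\a)\sqrt{R_\a(G)R_{-\a}(G)}$ with $\phi(\a):=\frac{\sqrt{\d/\D}}{C_\a}$ holds for every $\a>0$. Since $C_\a=\frac12\big((\D/\d)^{\a}+(\d/\D)^{\a}\big)$ is non-decreasing in $\a$, the coefficient $\phi$ is non-increasing on $(0,\infty)$; therefore, for $0<\a\le1$ one has $\phi(\a)\ge\phi(1)=\frac{2\D^{1/2}\d^{3/2}}{\D^2+\d^2}$, which is precisely the stated $k_\a$ in that range. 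Thus the single uniform estimate implies the theorem in both regimes, the stated constant for $0<\a\le 1$ being the (weaker) value $\phi(1)$ obtained from the genuinely proved $\phi(\a)$.

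Finally, for the equality discussion I would argue as follows. If $G$ is regular then $\D=\d$, whence $k_\a=1$ and $GA_1(G)=m=\sqrt{R_\a(G)R_{-\a}(G)}$, so both bounds are attained. Conversely, if $G$ is not regular then $\D>\d$: some edge has $d_u\neq d_v$, forcing $g<1$ there so that $GA_1(G)<m$ and the upper bound is strict; and on every edge $g\ge\frac{2\sqrt{\D\d}}{\D+\d}>\sqrt{\d/\D}$ (the last inequality being equivalent to $\D>\d$), so $GA_1(G)>\sqrt{\d/\D}\,m\ge k_\a\sqrt{R_\a(G)R_{-\a}(G)}$ and the lower bound is strict as well. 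Hence each equality holds if and only if $G$ is regular.
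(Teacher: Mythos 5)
Your proof is correct, and it takes a genuinely different route from the paper's. The paper keeps the geometric--arithmetic weight inside the Cauchy--Schwarz and P\'olya--Szeg\"o applications, following the same template as Theorems \ref{t:mz} and \ref{t:mz2}: for the upper bound it writes $\frac{2\sqrt{d_ud_v}}{d_u+d_v}=(d_ud_v)^{-\a/2}\cdot\frac{2\sqrt{d_ud_v}\,(d_ud_v)^{\a/2}}{d_u+d_v}$ and uses $\frac{4d_ud_v}{(d_u+d_v)^2}\le 1$; for the lower bound it applies Lemma \ref{l:PS} to $a_{uv}=\frac{2d_ud_v}{d_u+d_v}(d_ud_v)^{(\a-1)/2}$ and $b_{uv}=(d_ud_v)^{-\a/2}$, with range bounds from Lemma \ref{l:h} that differ according to $\a\ge 1$ or $0<\a\le 1$ --- which is precisely where the bifurcation in $k_\a$ originates. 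You instead decouple $GA_1$ from the Randi\'c sums entirely through the intermediate quantity $m$: the pointwise sandwich $\sqrt{\d/\D}\le g(d_u,d_v)\le 1$ gives $\sqrt{\d/\D}\,m\le GA_1(G)\le m$, while Cauchy--Schwarz and P\'olya--Szeg\"o applied to the clean weights $a_{uv}=(d_ud_v)^{\a/2}$, $b_{uv}=(d_ud_v)^{-\a/2}$ give $C_\a^{-1}\sqrt{R_\a(G) R_{-\a}(G)}\le m\le\sqrt{R_\a(G) R_{-\a}(G)}$ with $C_\a=\frac{\D^{2\a}+\d^{2\a}}{2\D^\a\d^\a}$. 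All steps check out, including the P\'olya--Szeg\"o ranges and the simplification $\sqrt{\d/\D}/C_\a=\frac{2\D^{\a-1/2}\d^{\a+1/2}}{\D^{2\a}+\d^{2\a}}$; your strictness argument for non-regular $G$ uses that some edge has $d_u\neq d_v$, which relies on the paper's standing connectivity assumption, so it is legitimate here. What your route buys is notable: the single uniform constant $\phi(\a)=\frac{2\D^{\a-1/2}\d^{\a+1/2}}{\D^{2\a}+\d^{2\a}}$ is valid for \emph{all} $\a>0$, coincides with $k_\a$ for $\a\ge 1$, and --- since $C_\a=\cosh(\a\ln(\D/\d))$ is non-decreasing --- is strictly larger than the stated $k_\a=\phi(1)$ on $0<\a<1$ whenever $\D>\d$. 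In other words, you show the case split at $\a=1$ is an artifact of the statement rather than of the proof, and you slightly strengthen the theorem on $(0,1)$, where the paper's own computation can only produce the $\a$-independent constant $\phi(1)$. Your equality analysis is also more complete than the paper's (which, due to a copy-paste slip, discusses the lower bound twice and never treats the upper one): the explicit chain $GA_1(G)\ge\frac{2\sqrt{\D\d}}{\D+\d}\,m>\sqrt{\d/\D}\,m$ for $\D>\d$ cleanly settles strictness in the lower bound, and the edge with $d_u\neq d_v$ settles it in the upper bound.
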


\begin{proof}
Cauchy-Schwarz inequality and Lemma \ref{c:t} give
$$
\begin{aligned}
GA_1(G) & = \sum_{uv\in E(G)}\frac{2\sqrt{d_u d_v}}{d_u + d_v}\\
&\le
\left(\sum_{uv\in E(G)} (d_ud_v)^{-\a} \right)^{1/2} \left(\sum_{uv\in E(G)} \frac{4d_u d_v(d_ud_v)^{\a}}{(d_u + d_v)^2}  \right)^{1/2}
\\
& \le \left(\sum_{uv\in E(G)} (d_ud_v)^{-\a} \right)^{1/2} \left(\sum_{uv\in E(G)} (d_ud_v)^{\a} \right)^{1/2}\\
&= \sqrt{R_{\a}(G) R_{-\a}(G)}
\,.
\end{aligned}
$$

Lemma \ref{l:h} gives
$$
\begin{aligned}
\d^{\a} \le \frac{2d_u d_v}{d_u + d_v}\,(d_u d_v)^{(\a-1)/2} \le \D^{\a}, \qquad & \text{ if }\,\a\ge 1,
\\
\d\D^{\a-1} \le \frac{2d_u d_v}{d_u + d_v}\,(d_u d_v)^{(\a-1)/2} \le \D\d^{\a-1}, \qquad & \text{ if }\,0< \a\le 1,
\end{aligned}
$$

If $\a\ge 1$, then these inequalities, $\D^{-\a} \le (d_u d_v)^{-\a/2} \le \d^{-\a}$
and Lemmas \ref{l:PS} and \ref{l:h} give

\begin{align*}
GA_1(G) & = \sum_{uv\in E(G)}\frac{\sqrt{d_u d_v}}{\frac12 (d_u + d_v)}\\
&\ge
\frac{\left(\sum_{uv\in E(G)} (d_ud_v)^{-\a} \right)^{1/2} \left(\sum_{uv\in E(G)} \frac{4(d_u d_v)^2}{(d_u + d_v)^2}\,(d_u d_v)^{\a-1}  \right)^{1/2}}
{\frac12 \left(\frac{\D^{\a}}{\d^{\a}} + \frac{\d^{\a}}{\D^{\a}} \right)}
\\
& = \frac{2\D^{\a}\d^{\a}}{\D^{2\a}+\d^{2\a}} \, \sqrt{R_{-\a}(G)}\, \left(\sum_{uv\in E(G)} \frac{2}{d_u + d_v} \,\frac{2d_u d_v}{d_u + d_v} \, (d_u d_v)^{\a} \right)^{1/2}
\\
& \ge \frac{2\D^{\a}\d^{\a}}{\D^{2\a}+\d^{2\a}} \, \sqrt{R_{-\a}(G)}\,
\left(\,\frac{\d}{\D}\sum_{uv\in E(G)} (d_u d_v)^{\a}\right)^{1/2}\\
&=  \frac{2\D^{\a-1/2}\d^{\a+1/2}}{\D^{2\a}+\d^{2\a}} \, \sqrt{ R_{\a}(G) R_{-\a}(G)}
\,.
\end{align*}

If $0 < \a\le 1$, then similar computations (using the bounds for $0 < \a\le 1$) give the lower bound.

\smallskip

If the graph is regular, then the two bounds are the same, and they are equal to $GA_1(G)$.
If the lower bound is attained, then Lemma \ref{l:h} gives
$d_u = d_v = \d$ for every $uv\in E(G)$ and we conclude $d_u = \d$ for every $u\in V(G)$.
If the lower bound is attained, then Lemma \ref{l:h} gives
$d_u = d_v = \D$ for every $uv\in E(G)$ and we conclude $d_u = \D$ for every $u\in V(G)$.
\end{proof}

In \cite[Theorem 4]{S} appear the inequalities
$$
\frac{2\d^2}{\D^2+\d^2} \sqrt{M_2(G) R_{-1}(G)}
\leq GA_1(G)
\le \sqrt{M_2(G) R_{-1}(G)} \, .
$$
Theorem \ref{t:mzz} generalizes these bounds.
Furthermore, the following consequence of Theorem \ref{t:mzz} (with $\a=1$) improves the lower bound above.

\begin{corollary} \label{c:mis29}
We have for any graph $G$
$$
\frac{2\d}{\D^2+\d^2} \, \sqrt{\d \D M_2(G) R_{-1}(G)}
\leq GA_1(G)
\le \sqrt{M_2(G) R_{-1}(G)} \, ,
$$
and the equality is attained if and only if $G$ is a regular graph.
\end{corollary}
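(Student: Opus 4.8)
The plan is to derive this corollary directly from Theorem \ref{t:mzz} by specializing to $\a = 1$. The key observation is that $R_1(G) = \sum_{uv\in E(G)} (d_u d_v)^1 = M_2(G)$ by the definitions of the general Randi\'c index and the second Zagreb index, so the quantity $\sqrt{R_{\a}(G) R_{-\a}(G)}$ appearing in Theorem \ref{t:mzz} becomes $\sqrt{M_2(G) R_{-1}(G)}$ when $\a = 1$. This immediately yields the upper bound $GA_1(G) \le \sqrt{M_2(G) R_{-1}(G)}$ with no further work.

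For the lower bound I would evaluate the constant $k_{\a}$ at $\a = 1$. Since $\a = 1$ lies on the boundary between the two cases in the definition of $k_{\a}$, I should check that both branches agree there: the branch for $0 < \a \le 1$ gives $k_1 = \frac{2\D^{1/2}\d^{3/2}}{\D^2 + \d^2}$, while the branch for $\a \ge 1$ gives $k_1 = \frac{2\D^{1-1/2}\d^{1+1/2}}{\D^2 + \d^2} = \frac{2\D^{1/2}\d^{3/2}}{\D^2 + \d^2}$, so the two expressions coincide and $k_{\a}$ is well defined at the boundary value.

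It then remains to rewrite $k_1 \sqrt{M_2(G) R_{-1}(G)}$ in the form stated in the corollary. Pulling a factor $\sqrt{\d\D}$ inside the square root, I would write
$$
\frac{2\D^{1/2}\d^{3/2}}{\D^2 + \d^2} \, \sqrt{M_2(G) R_{-1}(G)}
= \frac{2\d}{\D^2 + \d^2} \, \sqrt{\d\D \, M_2(G) R_{-1}(G)},
$$
using that $2\d^{3/2}\D^{1/2} = 2\d \cdot \sqrt{\d\D}$. This is exactly the claimed lower bound.

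Finally, the characterization of equality is inherited verbatim from Theorem \ref{t:mzz}, whose bounds are attained precisely when $G$ is regular; in that case both sides reduce to $GA_1(G)$, and conversely attainment of either bound forces, via Lemma \ref{l:h}, that $d_u = d_v = \d$ (respectively $d_u = d_v = \D$) on every edge, hence regularity. The proof involves no genuine obstacle: the only point requiring a moment's care is verifying that the two branches of $k_{\a}$ agree at $\a = 1$, and this is a one-line algebraic check.
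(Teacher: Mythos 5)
Your proposal is correct and follows exactly the paper's route: the corollary is stated there as the immediate consequence of Theorem \ref{t:mzz} with $\a=1$, using $R_1(G)=M_2(G)$ and the rewriting $2\D^{1/2}\d^{3/2}=2\d\sqrt{\d\D}$. Your check that the two branches of $k_{\a}$ agree at $\a=1$ is a nice explicit verification that the paper leaves implicit.
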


The \emph{modified Narumi-Katayama index}
$$
NK^*= NK^*(G) = \prod_{u\in V (G)} d_u^{d_u} = \prod_{uv\in E (G)} d_u d_v
$$
was introduced in \cite{GSG}, inspired in the Narumi-Katayama index defined in \cite{NK} (see also \cite{G}, \cite{N}).
Next, we prove some inequalities relating the modified Narumi-Katayama index with others topological indices.

\begin{theorem} \label{t:nk3}
We have for any graph $G$ and $\a \in \mathbb{R} \setminus\{0\}$
$$
R_\a(G) \ge m \, NK^*(G)^{\a/m},
$$
and the equality holds if and only if   $(d_u d_v)$ has the same value for every $uv\in E(G)$.
\end{theorem}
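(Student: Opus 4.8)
The plan is to recognize the stated inequality as a direct application of the arithmetic-geometric mean (AM-GM) inequality to the $m$ positive numbers indexed by the edges of $G$. For each edge $uv\in E(G)$ I would set $a_{uv}=(d_u d_v)^{\a}$; since every vertex degree is positive and $\a\neq 0$, each $a_{uv}$ is a well-defined positive real, and there are exactly $m$ of them. The arithmetic mean of these numbers is $\frac{1}{m}\sum_{uv\in E(G)}(d_u d_v)^{\a}=\frac{1}{m}R_{\a}(G)$, while their geometric mean is $\bigl(\prod_{uv\in E(G)}(d_u d_v)^{\a}\bigr)^{1/m}$.

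The key simplification is to rewrite this geometric mean using the definition of the modified Narumi-Katayama index. Since
$$
\prod_{uv\in E(G)}(d_u d_v)^{\a}=\Bigl(\prod_{uv\in E(G)}d_u d_v\Bigr)^{\a}=NK^*(G)^{\a},
$$
the geometric mean equals $NK^*(G)^{\a/m}$. Applying AM-GM (arithmetic mean $\ge$ geometric mean) then gives $\frac{1}{m}R_{\a}(G)\ge NK^*(G)^{\a/m}$, and multiplying through by $m>0$ yields the desired bound $R_{\a}(G)\ge m\,NK^*(G)^{\a/m}$.

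For the equality case I would invoke the standard equality condition for AM-GM: equality holds if and only if all the numbers $a_{uv}=(d_u d_v)^{\a}$ coincide. Because $\a\neq 0$, the map $t\mapsto t^{\a}$ is strictly monotone, hence injective, on $(0,\infty)$, so $(d_u d_v)^{\a}$ is constant over the edges precisely when $d_u d_v$ is constant over the edges. This is exactly the claimed characterization. I expect no substantial obstacle; the only point requiring a word of care is that $\a$ may be negative, but since all products $d_u d_v$ are strictly positive the quantities $(d_u d_v)^{\a}$ remain positive and both AM-GM and the injectivity of $t\mapsto t^{\a}$ apply verbatim for every nonzero real $\a$.
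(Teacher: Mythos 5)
Your proof is correct and follows essentially the same route as the paper: both apply the AM--GM inequality to the $m$ positive numbers $(d_u d_v)^{\a}$ indexed by the edges and identify the geometric mean with $NK^*(G)^{\a/m}$. Your remark on the injectivity of $t\mapsto t^{\a}$ for $\a\neq 0$ in the equality case is a small extra precision the paper leaves implicit, but the argument is the same.
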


\begin{proof}
Using the fact that the geometric mean is at most the arithmetic mean, we obtain
$$
\frac{1}{m}\,R_\a(G)
= \frac{1}{m}\,\sum_{uv\in E(G)} (d_u d_v)^\a
\ge \left(\, \prod_{uv\in E(G)} (d_u d_v)^\a \,\right)^{1/m}
= NK^*(G)^{\a/m}.
$$
The equality holds if and only if $(d_u d_v)$ has the same value for every $uv\in E(G)$.
\end{proof}

Theorems \ref{t:z1} and \ref{t:nk3} have the following consequence.

\begin{corollary} \label{c:nk3}
We have for any graph $G$ and $\a \in \mathbb{R} \setminus\{0\}$
$$
\begin{aligned}
GA_1(G)\geq \d^{-2\a} m \, NK^*(G)^{\a/m} ,
& \qquad \text{ if } \a \le -1/2,
\\
GA_1(G)\geq \d \D^{-2\a-1} m \, NK^*(G)^{\a/m} ,
& \qquad \text{ if } \a \ge -1/2,
\end{aligned}
$$
and the equality holds if and only if $G$ is regular.
\end{corollary}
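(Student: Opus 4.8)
The plan is to chain the lower bounds of Theorem \ref{t:z1} with the lower bound of Theorem \ref{t:nk3}, exactly as the sentence preceding the statement suggests. The corollary splits into the same two regimes $\a \le -1/2$ and $\a \ge -1/2$ as Theorem \ref{t:z1}, so I would treat them in parallel.

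First I would record from Theorem \ref{t:z1} the relevant lower bounds: if $\a \le -1/2$ then $GA_1(G) \ge \d^{-2\a} R_\a(G)$, and if $\a \ge -1/2$ then $GA_1(G) \ge \d\D^{-2\a-1} R_\a(G)$. Next I would invoke Theorem \ref{t:nk3}, which gives $R_\a(G) \ge m\, NK^*(G)^{\a/m}$ for every $\a \ne 0$. Since the multiplicative constants $\d^{-2\a}$ and $\d\D^{-2\a-1}$ are strictly positive, I may multiply this second inequality by the appropriate constant and substitute it into the first, obtaining the two claimed bounds at once. No further estimation is required; this substitution is the entire quantitative content.

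The only point requiring care is the equality characterization. Because the connecting constant is positive, the composite inequality is an equality precisely when both constituent inequalities are equalities simultaneously. I would argue as follows. If $G$ is regular, then Theorem \ref{t:z1} is an equality, and moreover every edge satisfies $d_u d_v = \d^2$ (a single common value), so Theorem \ref{t:nk3} is also an equality; hence the composite bound is sharp. Conversely, if the composite bound is attained, then in particular the Theorem \ref{t:z1} estimate is an equality, which by that theorem forces $G$ to be regular. Thus equality holds if and only if $G$ is regular.

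The step I expect to be most delicate is exactly this equality bookkeeping: one must observe that the equality condition of Theorem \ref{t:nk3} (all edge products $d_u d_v$ equal) is weaker than, and is implied by, the equality condition of Theorem \ref{t:z1} (regularity), so the two conditions never conflict and the composite condition collapses to regularity alone. Everything else is a mechanical substitution of one bound into the other.
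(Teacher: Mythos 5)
Your proposal is correct and matches the paper's intended argument: the paper presents the corollary as an immediate consequence of Theorems \ref{t:z1} and \ref{t:nk3}, chained exactly as you do, and your equality bookkeeping (regularity forces $d_ud_v=\d^2$ on every edge, so the Theorem \ref{t:nk3} equality condition is automatically satisfied, while attainment of the composite bound forces equality in Theorem \ref{t:z1} and hence regularity) is precisely the observation needed to justify the stated characterization.
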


\bibliographystyle{elsarticle-num}

\end{document}